\newtheorem{prop}{Proposition}
\numberwithin{prop}{section}
\newtheorem{lem}{Lemma}
\numberwithin{lem}{section}
\numberwithin{deff}{section}
\newtheorem{theor}{Theorem}
\numberwithin{theor}{section}
\newtheorem{cor}{Corollary}
\numberwithin{cor}{section}
\newtheorem{probl}{Problem}
\numberwithin{probl}{section}
\newcommand{\F}{\mathcal{F}}
\newcommand{\N}{\mathbb{N}}
\newcommand{\Z}{\mathbb{Z}}
\newcommand{\expect}{\mathbb{E}}
\DeclareMathOperator{\dom}{dom}
\DeclareMathOperator{\supp}{supp}
\DeclareMathOperator{\cont}{cont}
\DeclareMathOperator{\h}{h}
\DeclareMathOperator{\pr}{pr}
\DeclareMathOperator{\Fin}{Fin}
\DeclareMathOperator{\uac}{\overline{AC}}
\DeclareMathOperator{\uacf}{\overline{AC}_{\F}}
\DeclareMathOperator{\lacf}{\underline{AC}_{\F}}
\DeclareMathOperator{\dhf}{d_\F^{H}}
\DeclareMathOperator{\HD}{D^{H}}
\title{Kolmogorov complexity and entropy of amenable group actions}
\keywords{amenable group, computable group, Kolmogorov complexity,  Kolmogorov-Sinai entropy, topological entropy}
\author[1]{Andrei Alpeev}
\address{Andrei Alpeev,
	Chebyshev Laboratory, St. Petersburg State University, 14th Line, 29b, Saint Petersburg, 199178 Russia}
\email{a.alpeev@spbu.ru}
\subjclass[2000]{37B40, 37A35, 68Q30}
\begin{document}



\begin{abstract}
It was proved by Brudno that entropy and Kolmogorov complexity for dynamical systems are tightly related. We generalize his results to the case of arbitrary computable amenable group actions. Namely, for an ergodic shift-action, the asymptotic Kolmogorov complexity of a typical point is equal to the Kolmogorov-Sinai entropy of the action. For topological shift actions, the asymptotic Komogorov complexity of every point is bounded from above by the topological entropy, and there is a point attaining this bound.
\end{abstract}
\maketitle
\tableofcontents
\section{Introduction}

The Kolmogorov complexity $C(x)$ of a string $x$ is, roughly speaking, the minimal length of a computer program that outputs exactly string $x$. The definition of Kolmogorov complexity formalizes the intuition that, while the decimal representation of the number ${77}^{{77}^{77}}$ is rather lengthy, it contains a very small amount of information. Kolmogorov complexity could be defined for arbitrary finitary objects. Namely, given a set of finitary objects together with an enumeration of this set by natural numbers, we define the Kolmogorov complexity of an element of this set as the Kolmogorov complexity of its index given by the enumeration.

Consider the space $A^\Z$, where $A$ is a finite set. We endow this space with the product topology, assuming the discrete topology on $A$.
The {\em shift-action} of group $\Z$ on the space $A^\Z$ is defined by the formula 
\[
(\mathbf{S}^i x)(j) = x (i+j),
\]
for $x \in A^\Z$ and $i,j \in G$. 
A {\em subshift} is a closed $\mathbf{S}$-invariant set $X \subset A^\Z$.
For a point  $x \in A^\Z$ we define its {\em upper asymptotic complexity} by the formula 
\[
\uac(x) = \limsup_{n \to \infty}\frac{C(\pr_{[-n,n]}(x))}{2n+1}.
\]
The {\em lower asymptotic complexity} is defined in the similar way, but with the lower limit.

In the works \cite{Br74}, \cite{Br82}, Brudno showed that the entropy of the aforementioned shift-action is intimately connected with the Kolmogorov complexity of its points. Namely, he proved that the asymptotic complexity of any point in the subshift is bounded from above by the topological entropy of the subshift, and that there is a point whose asymptotic complexity equals the entropy of the subshift. On the measure-theoretic side, he proved that, for an ergodic invariant measure on a shift-action, almost every point has the asymptotic complexity equal to the Kolmogorov-Sinai entropy of the measure-preserving action.

Later on, the topological part of these results was generalized to the case of a $\Z^d$ action by Simpson in \cite{Si15}. 
Alexander Shen brought my attention to the questions in the latter work concerning generalization of Brudno's results to the case of an arbitrary amenable group. 

In my 2013 diploma work \cite{A13} I've managed to generalize some of Simpson's result. Let us remind some necessary definitions.
Let $G$ be a countable amenable group. We remind that a countable group $G$ is called amenable if it contains a {\em F\o lner sequence}. A F\o lner sequence is a sequence $(F_i)$ of finite subsets such that 
\[
\lim_{i \to \infty} \frac{\lvert g F_i \setminus F_i \rvert}{\lvert F_i\rvert} = 0
\]
for every $g \in G$. 
A F\o lner sequence is called {\em tempered} if there is a constant $K>0$ such that the inequality 
\begin{equation*}
\left\lvert \bigcup_{j<i} F_j^{-1}F_i\right\rvert \leq  K \lvert F_i\rvert
\end{equation*}
holds for all $i \in \N$.
Note that from any F\o lner sequence, a tempered F\o lner subsequence could be refined by a simple iterative procedure, see \cite[Proposition 1.4]{L01}.

A countable group $G$ is called computable if its elements are one-to-one enumerated by natural numbers, and there is a computable function $f$ of two arguments such that for any two $g,h \in G$ the index of $gh$ is equal to $f$ applied to the indices of $g$ and $h$.

We will say that a F\o lner sequence $(F_i)$ is modest if $C(F_i) = o(\lvert F_i \rvert)$ (the definition of the Kolmogorov complexity extends to any constructible class, see Subsection \ref{subsec: computability and kolmogorov}). Every computable amenable group admits a modest F\o lner sequence, see Theorem \ref{thm: modest exists}. In addition,
Theorem \ref{thm: geometric criterion} from Section \ref{sec: modest} presents a very natural sufficient geometric criterion for a F\o lner sequence of a finitely-generated group to be modest. Namely, if each element of the F\o lner sequence is edge-connected on the Cayley graph and contains the group identity element, then this F\o lner sequence is modest. 

For a finite set $A$ consider the natural shift-action of $G$ on the space $A^G$. This action is defined by the formula 
\[
(gx)(h) = x(hg)
\]
for $x \in A^G$ and $g,h \in G$.
We endow $A^G$ with the product topology, assuming the discrete topology on $A$. A {\em subshift} in this context is any closed $G$-invariant set. For a subshift $X$ we denote $\h_G^{top} (X)$ the {\em topological entropy} of the action of $G$ restricted to $X$. For any subset $F$ of $G$ we denote $\pr_F$ the natural projection map from $A^G$ to $A^F$.

For any $x \in A^G$, we denote 
\[
\uacf(x) = \limsup_{i \to \infty} \frac{C(\pr_{F_i}(x))}{\lvert F_i\rvert}   
\]
the {\em upper asympotic complexity} of $x$ relative to $\F$, and
\[
\lacf(x) = \liminf_{i \to \infty} \frac{C(\pr_{F_i}(x))}{\lvert F_i\rvert}   
\]
the {\em lower asympotic complexity} of $x$ relative to $\F$.

The following theorem consitutes the main result of my master's thesis \cite{A13}. That work is written in Russian and has never been published, so I have decided to incorporate its results into this paper.

\begin{theor}\label{thm: topological}
Let $X \subset A^G$ be a subshift over a computable amenable group $G$. Let $\F$ be a modest F\o lner sequence. For every $x\in X$ holds 
\[
\uacf (x) \leq \h_G^{top} (X).
\]
If $\F$ is also tempered, then there is such a point $y \in X$ that 
\[
\uacf(y) = \lacf (y) = \h_G^{top} (X).
\]
If in addition $X$ has continuum cardinality, then there is a continuum of such points $y$.
\end{theor}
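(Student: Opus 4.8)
The plan is to treat the universal upper bound, the existence of an attaining point, and the cardinality refinement in turn, the first being purely combinatorial and the latter two measure-theoretic.

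For the upper bound I would exploit the quasi-tiling structure of the shift together with modesty of $\F$. Recall that $\h_G^{top}(X)=\lim_i \frac{\log\lvert\pr_{F_i}(X)\rvert}{\lvert F_i\rvert}$ by the Ornstein--Weiss theory, so for every $\epsilon>0$ one can select finitely many shapes $F_{k_1},\dots,F_{k_s}$ from the tail of $\F$ that simultaneously satisfy $\log\lvert\pr_{F_{k_j}}(X)\rvert \leq \lvert F_{k_j}\rvert(\h_G^{top}(X)+\epsilon)$ and $\epsilon$-quasi-tile every sufficiently invariant finite set; both are tail conditions, hence met by taking $k_1<\dots<k_s$ large. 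Fix $x\in X$. For large $i$ I would run the greedy Ornstein--Weiss quasi-tiling algorithm on $F_i$, producing disjoint translates of the chosen shapes covering all but an $\epsilon$-fraction of $F_i$. The crucial point is that this algorithm is \emph{computable} from $F_i$ and the finitely many (hence $O(1)$) fixed shapes, so the decoder reproduces the very same quasi-tiling once it is handed $F_i$. I then describe $\pr_{F_i}(x)$ by: a program of length $C(F_i)=o(\lvert F_i\rvert)$ producing $F_i$; for each tile of shape $F_{k_j}$, the index of the restricted pattern inside the fixed finite set $\pr_{F_{k_j}}(X)$ (which is baked into the encoder as a constant), costing at most $\lvert F_{k_j}\rvert(\h_G^{top}(X)+\epsilon)$ bits; and a verbatim listing of the symbols on the uncovered part, costing at most $\epsilon\lvert F_i\rvert\log_2\lvert A\rvert$ bits. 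Since the tiles are disjoint subsets of $F_i$ their shapes have total size at most $\lvert F_i\rvert$, so the tile costs sum to at most $\lvert F_i\rvert(\h_G^{top}(X)+\epsilon)$. Dividing the total by $\lvert F_i\rvert$, taking the $\limsup$, and letting $\epsilon\to0$ yields $\uacf(x)\leq \h_G^{top}(X)$; note that no temperedness is used here.

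For the attaining point I would pass to the measure-theoretic side. Since $X$ is a subshift the $G$-action is expansive, so $\nu\mapsto\h_\nu$ is upper semicontinuous on the weak${}^{*}$-compact simplex of $G$-invariant measures; by the variational principle there is thus a measure of maximal entropy $\mu$ with $\h_\mu=\h_G^{top}(X)$. Decomposing $\mu$ into ergodic components and using that measure entropy is affine while every ergodic measure has entropy at most $\h_G^{top}(X)$, I extract an ergodic $\nu$ with $\h_\nu=\h_G^{top}(X)$. The companion ergodic theorem of this paper --- the measure-theoretic half of Brudno's result, whose proof invokes the Shannon--McMillan--Breiman and pointwise ergodic theorems along a tempered F\o lner sequence, which is precisely where temperedness of $\F$ is needed --- then gives that for $\nu$-almost every $y$ one has $\uacf(y)=\lacf(y)=\h_\nu=\h_G^{top}(X)$. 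Any such $y$ lies in $\supp\nu\subseteq X$ and attains the bound.

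For the cardinality statement, if $\h_G^{top}(X)=0$ then the upper bound already forces $\uacf(x)=0$ for every $x\in X$, while $\lacf(x)\geq0$ trivially, so every one of the continuum-many points of $X$ works. If $\h_G^{top}(X)>0$ then the ergodic $\nu$ above is non-atomic (an ergodic measure carried by a finite orbit has zero entropy), whence the $\nu$-full-measure set of good points is uncountable and therefore of cardinality continuum. The main obstacle I anticipate is twofold. In the upper bound it is arranging the encoding so that the Ornstein--Weiss quasi-tiling is paid for only through $C(F_i)=o(\lvert F_i\rvert)$: this forces the decoder to recompute the tiling rather than read it off and relies essentially on both the computability of the group/F\o lner data and on modesty. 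In the attainment part the genuine work is contained in the measure-theoretic theorem, where converting the almost-everywhere Shannon--McMillan--Breiman convergence into a pointwise complexity statement requires the amenable pointwise ergodic theorem, available exactly because $\F$ is tempered; by comparison, producing the maximal-entropy ergodic measure and controlling its atoms is routine.
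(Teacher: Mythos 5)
Your proposal is correct and follows essentially the same route as the paper: a computable Ornstein--Weiss quasi-tiling of $F_i$ by finitely many fixed shapes, paid for only through $C(F_i)=o(\lvert F_i\rvert)$, gives the upper bound, and the variational principle combined with the measure-theoretic lower bound along a tempered F\o lner sequence gives the attaining point and the continuum statement via non-atomicity. The only cosmetic difference is that for the attaining point you invoke the full measure-theoretic Brudno theorem, whereas the paper needs only its easier half (the a.e.\ bound $\lacf(y)\geq \h_G$) together with the already-established topological upper bound.
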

In Proposition \ref{prop: topological upper bound} we derive the upper bound for the asymptotic complexity in terms of the topological entropy. Proposition \ref{prop: topological entropy lower bound} states that there is a point with the complexity greater of equal than the topological entropy, and that the last assertion of the theorem above holds.

Afterward, Moriakov in the papers \cite{Mo15a} and \cite{Mo15b} independently considered the generalizations of Brudno's results to the case of amenable groups having so-called computable F\o lner monotiling. The latter means that the group has a special F\o lner sequence each of whose elemements can tile the whole group, and that this tiling can be generated by a computer program. This requirement seems to be rather restrictive, in particular, it is not known whether all amenable groups are monotileable. On the other hand, one can try to combine Moriakov's approach with the recent advances concerning tilings of amenable groups by finitely many F\o lner sets (see \cite{DHZ19} and also \cite{Ceta18}). I do not pursue that path in this paper.

In the current work I present a proof for the general case of measure-entropy Brudno's theorem: 

\begin{theor}\label{thm: measure}
Let $G$ be a computable amenable group. Let $\F$ be a modest tempered F\o lner sequence
Let $\mu$ be an ergodic invariant measure on $A^G$. Then for $\mu$-a.e. $x \in A^G$ we have 
\[
\uacf(x) = \lacf(x) = \h_G(A^G,\mu),
\]
where $\h_G(A^G,\mu)$ denotes the Kolmogorov-Sinai entropy of the action.
\end{theor}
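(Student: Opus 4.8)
Write $h=\h_G(A^G,\mu)$. Since one always has $\lacf(x)\le\uacf(x)$, it suffices to establish the two one-sided bounds $\uacf(x)\le h$ and $\lacf(x)\ge h$ for $\mu$-a.e.\ $x$: together they force $h\le\lacf(x)\le\uacf(x)\le h$ almost everywhere. The two main inputs are, first, the pointwise ergodic theorem and the Shannon--McMillan--Breiman theorem of Lindenstrauss for actions of amenable groups along a tempered F\o lner sequence, which give for $\mu$-a.e.\ $x$ the full-sequence convergences
\[
\frac{1}{|F_i|}\sum_{g\in F_i}\mathbf 1_{B}(gx)\xrightarrow[i\to\infty]{}\mu(B)\qquad\text{and}\qquad -\frac{1}{|F_i|}\log\mu([x]_{F_i})\xrightarrow[i\to\infty]{}h,
\]
where $[x]_{F_i}=\pr_{F_i}^{-1}(\pr_{F_i}(x))$ is the cylinder determined by $x$ on $F_i$; and second, the elementary fact that there are at most $2^{m+1}$ strings of complexity at most $m$. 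Modesty of $\F$ enters only through the cost $C(F_i)=o(|F_i|)$ of communicating the shape $F_i$ to the decoder, and computability of $G$ makes the coordinate book-keeping effective.

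For the upper bound I would use an effective Shannon coding (a method-of-types argument). Fix $\epsilon>0$ and a finite window $W\subset G$ large enough that $\frac{1}{|W|}H_\mu\!\big(\bigvee_{g\in W}\mathcal P^{g}\big)\le h+\tfrac{\epsilon}{2}$, where $\mathcal P$ is the partition by the symbol at the identity; such $W$ exists because the entropy rate is the infimum of these normalized partition entropies. By the pointwise ergodic theorem, for $\mu$-a.e.\ $x$ the empirical frequencies of the $W$-patterns inside $\pr_{F_i}(x)$ converge, along the \emph{full} sequence, to their $\mu$-values; fixing a computable Ornstein--Weiss quasi-tiling of each $F_i$ by translates of $W$, this means that for all large $i$ the pattern $\pr_{F_i}(x)$ lies in a type class — the set of patterns with the prescribed approximate empirical $W$-statistics — of cardinality at most $2^{|F_i|(h+\epsilon)}$. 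One then encodes $\pr_{F_i}(x)$ by the concatenation of a description of $F_i$ (cost $o(|F_i|)$ by modesty), the empirical histogram on $A^{W}$ (a vector of $O(1)$ integers in $[0,|F_i|]$, cost $o(|F_i|)$), and the index of $\pr_{F_i}(x)$ within its type class, which the decoder recovers by effectively enumerating that class (cost $\le|F_i|(h+\epsilon)$). Hence $\uacf(x)\le h+\epsilon$ a.e.; intersecting over a sequence $\epsilon\downarrow0$ gives $\uacf(x)\le h$ a.e. The technical heart of this step is making the quasi-tiling and the type-counting precise over a general amenable group.

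For the lower bound I would argue by incompressibility. Fix $\epsilon>0$ and set $L_i=\{w\in A^{F_i}:C(w)\le|F_i|(h-\epsilon)\}$, so that $|L_i|\le2^{|F_i|(h-\epsilon)+1}$. Shannon--McMillan--Breiman shows that for a.e.\ $x$ one eventually has $\mu([x]_{F_i})\le2^{-|F_i|(h-\epsilon/2)}$, whence the contribution of the typical low-complexity patterns is controlled by
\[
\mu\Big(\big\{x:\ \pr_{F_i}(x)\in L_i,\ \mu([x]_{F_i})\le2^{-|F_i|(h-\epsilon/2)}\big\}\Big)\le|L_i|\,2^{-|F_i|(h-\epsilon/2)}\le 2^{\,1-\epsilon|F_i|/2}.
\]
Splitting off the a.e.\ eventually empty event where $x$ is not yet SMB-typical, this bounds $\mu\big(\{C(\pr_{F_i}(x))\le|F_i|(h-\epsilon)\}\big)$ and shows it tends to $0$. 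To pass from this convergence \emph{in measure} to the almost-everywhere statement $\lacf(x)\ge h-\epsilon$, which needs the bound to hold for \emph{all} large $i$ along the full sequence, I would sum the displayed estimates over $i$ and invoke Borel--Cantelli.

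The main obstacle is precisely this last a.e.\ upgrade: the series $\sum_i 2^{-\epsilon|F_i|/2}$ need not converge when $|F_i|$ grows slowly in the index $i$, and indeed a spread-out F\o lner sequence with $|F_i|\asymp\log i$ (which is neither tempered nor modest) makes the lower bound genuinely fail, so these hypotheses must be used here in an essential way, not merely to power the SMB theorem. Passing first to the subsequence of \emph{distinct} F\o lner sets — along which the $\liminf$ is unchanged — the tempered inequality yields $\big|\bigcup_{j<i}F_j\big|\le K|F_i|$, so the distinct sets $F_1,\dots,F_{i-1}$ all sit inside a region of size $\le K|F_i|$; bounding how many almost-invariant sets can be packed into such a region forces $|F_i|$ to grow fast enough in $i$ for the Borel--Cantelli series to converge. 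Turning this packing estimate into the required summability — equivalently, showing that $\mu$-a.e.\ point has only finitely many bad scales, or alternatively adapting Lindenstrauss's maximal-inequality method to obtain the a.e.\ bound directly — is the step I expect to be the most delicate, and it is where temperedness and modesty genuinely earn their keep.
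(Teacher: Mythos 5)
Your overall decomposition into the two one-sided bounds is exactly the paper's, and your lower-bound skeleton (Lindenstrauss's Shannon--McMillan--Breiman theorem, the count of at most $2^{m}$ objects of complexity below $m$, Borel--Cantelli) coincides with Proposition \ref{prop: measure lower bound}. But the step you yourself flag as delicate --- summability of $\sum_i 2^{-\varepsilon|F_i|/2}$ --- is resolved by the wrong hypothesis in your sketch. Temperedness does give $\bigl\lvert\bigcup_{j<i}F_j\bigr\rvert\le K\lvert F_i\rvert$, but packing distinct almost-invariant sets into a region of size $K k$ only bounds the number of indices $j$ with $\lvert F_j\rvert=k$ by something like $\binom{Kk}{k}\approx 2^{ck}$ with $c$ depending on $K$; for small $\varepsilon$ this exceeds $2^{\varepsilon k}$ and the series still diverges. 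The hypothesis that actually closes this gap is modesty, used exactly as in Proposition \ref{prop: modest convergence}: after discarding finitely many terms, $C(F_i)<\tfrac{\varepsilon}{2}\lvert F_i\rvert$, and since at most $2^{\varepsilon k/2}$ objects have complexity below $\varepsilon k/2$, there are at most $2^{\varepsilon k/2}$ distinct $F_i$ of cardinality $k$; hence $\sum_i 2^{-\varepsilon\lvert F_i\rvert}\le\sum_k 2^{-\varepsilon k/2}<\infty$. Temperedness is needed only to run SMB and the pointwise ergodic theorem along the full sequence, not for the Borel--Cantelli sum.

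For the upper bound your type-class/quasi-tiling plan is a genuinely different route from the paper's, and it is the half the paper regards as its main contribution precisely because the type-counting you defer is hard for general amenable $G$: the ergodic theorem controls empirical $W$-window frequencies averaged over all of $F_i$, not over the sparse set of tile centers of an Ornstein--Weiss quasi-tiling, and converting either statistic into a $2^{\lvert F_i\rvert(h+\epsilon)}$ cardinality bound for an effectively enumerable class (with overlapping tiles and an uncovered remainder) is an unproved counting lemma, essentially the amenable Shannon--McMillan counting bound made computable. The paper avoids this entirely: Proposition \ref{prop: measure upper bound} first proves the single-letter bound $\uacf(x)\le H(\alpha)$ for the canonical alphabet partition (Lemma \ref{lem: simple bound for complexity via entropy}, via letter frequencies and the combinatorial bound $C(w)\le\lvert w\rvert(H(p(w))+o(1))$), then shows $\uacf$ does not increase under factor maps (Proposition \ref{prop: compl is factor-monotone}, by approximating a factor map by cellular maps and controlling complexity under small Hamming perturbations), and finally invokes the Seward--Tucker-Drob generating-partition theorem (Theorem \ref{thm: small entropy generator}) to realize $(A^G,\mu)$ as a factor of a symbolic system whose alphabet partition has entropy below $h+\varepsilon$, taking a product with a small-entropy Bernoulli shift to secure essential freeness. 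If you want to pursue your route instead, the missing ingredient is the computable quasi-tiling counting argument; as written, both halves of your proposal have a genuine gap at the step each relies on most.
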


Proof naturally splits into proving the corresponding lower and upper bounds. The lower bound for the complexity is established in the proposition \ref{prop: measure lower bound}. This bound is basically an adaptation of the argument from \cite{Si15}.

The upper bound constitutes Proposition \ref{prop: measure upper bound}. 
I consider this bound to be the main contribution of this paper. The proof has three main ingredients:
a Shannon-type frequency bound for asymptotic complexity; the monotonicity of asymptotic complexity under factor  maps between measure preserving systems;
the recent developements in generating partition theory due to Seward \cite{Se19}, Seward and Tucker-Drob \cite{SeTD}. The much weaker ``mean'' form of the bound was used by Bernshteyn in \cite{Be16}. 

The results of this paper have been announced in \cite{A18}.
 
It might be interesting to prove an ``effective''  analog of Theorem \ref{thm: measure}:
\begin{probl}
Does the statement of Theorem \ref{thm: measure} holds for a Martin-L{\"o}f random point?
\end{probl}

{\em Acknowledgements.} Research is supported by the Russian Science Foundation grant No14-21-00035. I would like to thank Alexander Shen for suggestion to work on generalizations of Brudno's results, and for helpful discussions. Discussions with Pavel Galashin were instrumental in proving Theorem \ref{thm: geometric criterion}. I'd like to thank Sergey Kryzhevich for his comments on my diploma work \cite{A13}; results from that work constitute a substantial part of this paper. I thank Pierre Guillon for the comments on the manuscript.   

\section{Preliminaries}

We will use the symbol ``$\Subset$'' for ``a finite subset''.

\subsection{Computability and Kolmogorov complexity}\label{subsec: computability and kolmogorov}
In the sequel we may assume without loss of generality that our notion of computability is augmented with a fixed oracle. This might change the notions relying on the notion of computablitity, but all the properties and arguments used in this paper remain true after the relativization. Of course, the notions used in the statements of the theorems should be adjusted accordingly.

Denote $2^*$ the set of all finite binary strings (the empty one included). 

For $x \in 2^*$ we denote $\lvert x \rvert$ the length of $x$. Let $f$ be a partial (i.e. not necessarily everywhere defined) computable function $2^* \to 2^*$. For $x \in 2^*$ we denote $C_f(x)$ the smallest $\lvert y\rvert$ such that $f(y) = x$ ($C_f(x) = +\infty$ if there are no such $y$'s). By the Kolmogorov-Solomonoff theorem (see \cite[Theorem 2.1.1]{LV97} or \cite[Theorem 1]{SUV}), there is an ``optimal'' computable function $f$, such that for any other computable $f'$ there is a constant $K$ for that inequality
\begin{equation}\label{eq: complexity optimality}
C_f(x) < C_{f'}(x) + K
\end{equation} 
holds for any $x \in 2^*$.
We fix any $f$ optimal in this sense. The {\em Kolmogorov complexity} of a string $x$ is defined as $C_f(x)$ and denoted by $C(x)$. It is easy to see that for any natural $k$ there are at most $2^k$ words $x \in 2^*$ satusfying the bound $C(x)<k$. An immediate consequence of the definition is that for any partial computable function $t$ there is a constant $K$ such that 
\begin{equation}\label{eq: first monotonicity}
C(t(x)) < C (x) + K
\end{equation}
for any $x \in 2^*$. Indeed, take $f' = t \circ f$ in the Inequality \ref{eq: complexity optimality}.

Enumeration is the usual device for extension of the notion of computability from the basic class of natural numbers (or binary strings). In the literature, for each new class of combinatorial objects (say, finite graphs or some colored finite structure) the enumeration is usually given in an {\em ad hoc} manner. While not causing much trouble in most cases, this approch seems inappropriate to me. Consider the following example. Let's say we want enumerate the class of finite graphs with vertices uniquely labeled by natural numbers from $1$ to the vertex count. One of generic ways is to consider string of the form: number of vertices + number of edges + list of edges (some sort of padding should be introduced in order to distiguish these parts), then we lexicogrpaphically enumerate all the ``correct'' discriptions, leaving for each graph only the lexicograpically first one. On the other hand, we can introduce the following kind of enumeration. Let $B$ be a non-enumberable and non-co-enumerable set of natural numbers. Consider such an enumertation of graphs that odd indices would correspond to graphs whose vertex number is in $B$, and even - whose vertex numbers are not in $B$. Intuitively, it seems like the first enumeration is ``natural'', while the second is not. In fact, this distinction would become very apparent if we would like to prove some computability statements for this class of labeled graphs. The distinction described is similar to that between G{\"o}del and non-G{\"o}del enumerations of computable functions. The purpose of the following is to sketch the way to formally separate  natural from non-natural enumerations for some classes of objects.

Let us give an inductive definition of a {\em constructible class} and a {\em natural enumeration} of a constructible class.
The set of natural numbers is a constructible class, and any computable bijection from it to any its decidable subset is a natural enumeration. Any finite set is a constructible class and any bijection from it to a subset of natural numbers is a natural enumeration. Let $U_0$ be a countable set together with a finite collection of constructible classes $U_1, \ldots , U_m$ and a finite collection $(f_i)$ of functions $f_i : U_{k^i_1} \times \ldots \times U_{k^i_{n_i}} \to U_0$ (called {\em construction functions}), with $k^i_j \in 0, \ldots, m$ and arities $n_i \in \N_0$ ($0$-ary functions are allowed, we will call them {\em constants}).
We will say that this forms a constructible class if there exists a bijection $q_0$ from $U_0$ to a decidable subset of natural numbers such that the following hold:
\begin{enumerate}
\item\label{iten: computability of natural functions} for some collection of natural enumerations $q_1, \ldots\, q_m$ for respective constructible classes, the functions $q_0 \circ f_i$ are computable (after identifying each $U_1, \ldots, U_m$ with a decidable subset of natuaral number via the respective enumeration);
\item the set $U_0$ is the minimal set closed under construction functions $(f_i)$, i.e. for any proper subset $U_0'$ of $U_0$ there is a construction function $f_i$ and arguments $u^j \in U_{k^i_j}$ such that $u^j \in U'_0$ for each $j$ with $k^i_j = 0$, and that $f_i(u^1, \ldots, u^{n_i}) \notin U'_0$.
\end{enumerate}

We say that any $q_0$ satisfying these requirements is a natural enumeration for this constructible class.

Note that the second clause means that any element of the constructible class could be obtained by an expression  that involve only contriction functions and elements from $U_1, \ldots, U_m$.

It is easy to prove inductively that all natural enumerations are computably equivalent:

\begin{prop}\label{prop: equivalence of enumerations}
Let $U$ be a constructible class and $q,q'$ be any two natural enumerations. Then the function $q' \circ q^{-1}$ is computable. Also, in the clause \ref{iten: computability of natural functions} of the definition above, one can substitute ``some collection of natural enumerations'' with ``any collection of natural enumerations''.
\end{prop}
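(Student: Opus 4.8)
The plan is to argue by structural induction on the inductive definition of a constructible class, proving both assertions of the proposition simultaneously, since they are interdependent. The base cases are immediate. If $U = \N$, then two natural enumerations $q, q'$ are computable bijections onto decidable subsets, and $q^{-1}$ is computable by unbounded search (evaluate $q(0), q(1), \dots$ until the given value appears), so $q' \circ q^{-1}$ is computable as a composition. If $U$ is finite, then $q' \circ q^{-1}$ is a finite function and hence computable by table lookup. In both cases clause \ref{iten: computability of natural functions} is vacuous.

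For the inductive step I would take $U_0$ built from constructible classes $U_1, \dots, U_m$ by construction functions $(f_i)$, and assume the proposition for each $U_1, \dots, U_m$. First I would establish the ``any collection'' assertion. Let $q_0$ be a natural enumeration of $U_0$, witnessed by some collection $q_1, \dots, q_m$, and let $q_1'', \dots, q_m''$ be an arbitrary collection of natural enumerations of the respective subclasses. By the induction hypothesis applied to each $U_k$ with the pair $q_k'', q_k$, the conversion function $q_k \circ (q_k'')^{-1}$ is computable, so a $q_k''$-index can be turned effectively into the corresponding $q_k$-index; the $U_0$-valued argument slots (those with $k^i_j = 0$) use $q_0$ in either case and need no conversion. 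Precomposing these conversions with the computable map $q_0 \circ f_i$ shows that $q_0 \circ f_i$ is still computable relative to $q_1'', \dots, q_m''$, which is exactly clause \ref{iten: computability of natural functions} for the arbitrary collection.

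It then remains to show that any two natural enumerations $q_0, q_0'$ of $U_0$ satisfy $q_0' \circ q_0^{-1}$ computable. Using the ``any collection'' assertion just proved, I would fix a \emph{single} collection $q_1, \dots, q_m$ relative to which both $q_0 \circ f_i$ and $q_0' \circ f_i$ are computable for every $i$. The key device is the set of formal \emph{terms}: finite expression trees whose internal nodes are the $f_i$, whose argument slots of type $k^i_j \geq 1$ carry elements of $U_{k^i_j}$ encoded by their $q_{k^i_j}$-indices, and whose slots of type $0$ carry subterms. Since each $U_k$ with $k \geq 1$ is effectively enumerable through the decidable image of $q_k$, the set of term codes is computably enumerable, and from a term one computes the $q_0$-index of its value by bottom-up evaluation using the maps $q_0 \circ f_i$, and likewise the $q_0'$-index using $q_0' \circ f_i$. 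The minimality clause (2) of the definition guarantees that every element of $U_0$ is the value of some term. Hence I would compute $q_0' \circ q_0^{-1}$ as follows: on input $n$ in the decidable set $\operatorname{im}(q_0)$, enumerate terms $t$ until one with $q_0(\operatorname{eval}(t)) = n$ appears (the search halts by minimality, and since $q_0$ is injective such a $t$ satisfies $\operatorname{eval}(t) = q_0^{-1}(n)$), and output $q_0'(\operatorname{eval}(t))$.

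The main obstacle is precisely the interlocking of the two assertions: computable equivalence at level $U_0$ requires a common collection of subclass enumerations relative to which both $q_0 \circ f_i$ and $q_0' \circ f_i$ are computable, and that common collection is furnished by the ``any collection'' strengthening, which itself rests on the computable equivalence already proved for the subclasses. Carrying both statements together through the induction resolves this circularity. The remaining points — that terms admit an effective encoding with decidable validity (the membership tests for the images of the $q_k$ are decidable) and that bottom-up evaluation of a finite tree is effective — are routine.
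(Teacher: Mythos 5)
Your proposal is correct and follows essentially the same route as the paper: both handle the interdependence by deriving the ``any collection'' clause for $U_0$ from the induction hypothesis on $U_1,\dots,U_m$, and then compute $q'\circ q^{-1}$ by effectively generating all elements of $U_0$ through repeated application of the construction functions while tracking both indices, with the minimality clause guaranteeing termination. Your term-tree formalization is just a more explicit rendering of the paper's ``apply construction functions in all possible ways'' closure procedure.
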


\begin{proof}
The base of the induction (the constructible class of natural numbers and finite constructible classes) is trivial. 
Now let $U_0$ be a construstible class whose definition relies on constructible classes $U_1, \ldots, U_m$. By induction, we assume that the proposition holds for these.
Note that the last assertion of the proposition holds trivially for $U_0$ since the first assertion holds for $U_1, \ldots, U_m$. In order to verify the first assertion for $U_0$, it is enough to show that the graph of $q' \circ q^{-1}$ is a computably enumerable set. We fix a collection of natural enumeraitions $q_1, \ldots, q_m$ for $U_1, \ldots, U_m$ respectively. Since the second assertion of the proposition holds for $U_0$, compositions $q \circ f_i$ are computable functions if we identify $U_i$, $i = 1, \ldots, m$ with subsets of the natural numbers via $q_i$'s and $U_0$ via $q$. The same holds for $q'$ instead of $q$. So now we want to know which $q'$-indices correspond to $q$-indices. To do so we apply construction function in all possible ways, using only those values from $U_0$ where we know the correspondence of $q$ and $q'$ indices. In this way we will obtain new values of corresponding $q$ and $q'$ indices.
The fact that $U_0$ is generated by the construction functions implies that the procedure described will eventually give the corresponding $q$ and $q'$ indices for all elements of $U_0$.
\end{proof}

Instantly we have the following:

\begin{cor}
Let $U_0, \ldots, U_m$ be a finite collection of constructible classes.
If $f: U_1 \times \ldots \times U_m \to U_0$ is a function that is computable after identifying each $U_i$ with a subset of natural numbers via some natural enumeration, then this function will remain computable after identifying each $U_i$ with a subset of natural numbers via any natural enumeration.
\end{cor}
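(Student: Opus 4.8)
The plan is to deduce this directly from Proposition~\ref{prop: equivalence of enumerations}, which guarantees that for each constructible class the composition of one natural enumeration with the inverse of another is computable. The statement of the corollary is essentially a ``change of coordinates'' argument: computability of $f$ in one system of natural enumerations transfers to any other system, because the translation maps between enumerations are themselves computable.

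First I would fix the two systems explicitly. Let $q_0, \ldots, q_m$ be natural enumerations with respect to which $f$ is computable, that is, the induced map on indices $\tilde f := q_0 \circ f \circ (q_1^{-1} \times \cdots \times q_m^{-1})$ is a computable partial function on tuples of natural numbers. Let $q_0', \ldots, q_m'$ be any other natural enumerations; I want to show that $\tilde f' := q_0' \circ f \circ ((q_1')^{-1} \times \cdots \times (q_m')^{-1})$ is also computable.

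The key step is to factor $\tilde f'$ through $\tilde f$. Writing it out,
\[
\tilde f' = \bigl(q_0' \circ q_0^{-1}\bigr) \circ \tilde f \circ \bigl((q_1 \circ (q_1')^{-1}) \times \cdots \times (q_m \circ (q_m')^{-1})\bigr).
\]
Each translation map $q_0' \circ q_0^{-1}$ and $q_i \circ (q_i')^{-1}$ is computable by Proposition~\ref{prop: equivalence of enumerations} applied to the respective constructible class. Since a composition of computable partial functions is computable, and a product of computable functions applied to the components of a tuple is computable, $\tilde f'$ is computable, which is exactly the assertion.

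I expect no genuine obstacle here: the entire content is already packaged in Proposition~\ref{prop: equivalence of enumerations}, and what remains is the routine observation that computable partial functions are closed under composition and under forming tuples. The only point requiring a line of care is checking that the domains match up; since $f$ and the maps $(q_i')^{-1}$ are partial, one should note that the composition above is defined exactly on those tuples of indices that name genuine elements of the $U_i$, so that no spurious values are introduced by the change of enumeration.
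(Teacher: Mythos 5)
Your proof is correct and follows exactly the route the paper intends: the corollary is stated as an immediate consequence of Proposition \ref{prop: equivalence of enumerations}, and your explicit factorization of the new index map through the old one via the computable translation maps $q_0'\circ q_0^{-1}$ and $q_i\circ (q_i')^{-1}$ is just that argument spelled out. No gaps.
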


Let $U_0, \ldots, U_m$ be a collection of constructible classes. We will say that a function $f: U_1 \times \ldots \times U_m \to U_0$ is computable if it is computable after identifying each constructible class via some (=any) natural enumeration.

In practical situations it is hard to find unique natural collection of construction functions. The next corollary, that follows directly from the previous one, resolves this issue.

\begin{cor}
If two constructible classes share the same underlying set $U$, and the collection of construction function of the first one is a subset of the collection of construction function of the second one, then these two constructible classes have the same set of natural enumerations.
\end{cor}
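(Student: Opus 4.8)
The plan is to reduce everything to the previous corollary. Write $\mathcal{C}_1$ and $\mathcal{C}_2$ for the two constructible classes, both built on the common underlying set $U$, and let $\Phi_1 \subseteq \Phi_2$ denote their respective collections of construction functions. I would first observe that the minimality requirement (the second clause of the definition) is a property of the structure alone and makes no reference to any enumeration; since both $\mathcal{C}_1$ and $\mathcal{C}_2$ are assumed to be genuine constructible classes, that clause already holds for each. Consequently, deciding whether a bijection $q$ from $U$ onto a decidable set of natural numbers is a natural enumeration comes down entirely to the first clause, namely to the computability of the pertinent construction functions.

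I would then prove the two inclusions of the sets of natural enumerations separately. The easy inclusion is that every natural enumeration $q'$ of $\mathcal{C}_2$ is a natural enumeration of $\mathcal{C}_1$: by definition $q'$ makes all functions in $\Phi_2$ computable, and since $\Phi_1 \subseteq \Phi_2$ it a fortiori makes all functions in $\Phi_1$ computable, which is exactly what is required. Note that $\mathcal{C}_2$, being a constructible class, possesses at least one natural enumeration $q'$, so this supply is nonempty.

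For the reverse inclusion, let $q$ be any natural enumeration of $\mathcal{C}_1$ and fix a natural enumeration $q'$ of $\mathcal{C}_2$ as above; by the easy inclusion $q'$ is also a natural enumeration of $\mathcal{C}_1$. I must check that every $f \in \Phi_2$ is computable when the codomain $U$ is coordinatized by $q$, each auxiliary factor $U_1, \ldots, U_m$ by a natural enumeration of the respective class, and each domain factor equal to $U_0 = U$ again by $q$. For $f \in \Phi_1$ this holds by the hypothesis on $q$. For $f \in \Phi_2 \setminus \Phi_1$, I use that $f$ is computable with respect to $q'$, and that $q$ and $q'$ are two natural enumerations of the single constructible class $\mathcal{C}_1$; the previous corollary then transfers computability from $q'$ to $q$, consistently at every occurrence of $U_0$ in the domain as well as at the codomain. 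Hence all of $\Phi_2$ is computable under $q$, so $q$ is a natural enumeration of $\mathcal{C}_2$.

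The only point that needs care — and the step I would treat as the crux — is the application of the previous corollary in the reverse inclusion: one has to view the shared set $U$ as the fixed constructible class $\mathcal{C}_1$, so that $q$ and $q'$ are recognized as two natural enumerations of the \emph{same} class, and simultaneously invoke Proposition \ref{prop: equivalence of enumerations} for the auxiliary classes to guarantee that their coordinatizations may be chosen independently of the passage from $q'$ to $q$. Once this bookkeeping is arranged, no computation is left to perform and the statement follows.
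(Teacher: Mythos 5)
Your proof is correct and follows exactly the route the paper intends: the paper states only that the corollary ``follows directly from the previous one,'' and your argument is a careful unwinding of that reduction, using the enumeration-independence of the minimality clause, the trivial inclusion for the larger function collection, and the transfer of computability between two natural enumerations of $\mathcal{C}_1$ via Proposition \ref{prop: equivalence of enumerations}. No gaps; this is the same approach, spelled out in more detail than the paper provides.
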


Let us proceed with some examples. 

For any finite set $A$, the set $A^*$ of all finite strings is a constructible class. For the set of construction functions we can take a constant $f_0$ for the empty string, and a function $f_1: A^* \times A \to A^*$ which appends a symbol to the string. For a natural enumeration we can first order all the strings by their length and then lexicographically if the lengths ar equal. In the later examples we leave the construction of a natural enumeration as an exercise.

A product of any two constructible classes $U_1$ and $U_2$ could be naturally considered a constructible class. Indeed, fix a constant $f_0$ for an arbitrary element $w$ from $U_1 \times U_2$. Fix two additional construction functions $f_1: (U_1 \times U_2) \times U_1 \to U_1 \times U_2$ such that $f_1((u_1,u_2), u'_1) = (u'_1, u_2)$, and $f_2: (U_1 \times U_2) \times U_2 \to U_1 \times U_2$ such that $f_2((u_1,u_2), u'_2) = (u_1, u'_2)$. In fact, any finite product of constructible classes could be considered a constructible class in a natural fashion.

The set $\Fin(U)$ of all finite subsets of a contructible class $U$ could be naturally considered a constructible class. For a set of construction functions we take a constant $f_0$ returning the empty subset, and the function $f^1 : \Fin(U) \times U \to Fin(U)$ defined by $f_1(w, u) = w \cup \lbrace u\rbrace$.

For any constructible class $U$ and any finite set $A$, the set of partial maps with finite domains $A^{\Subset U}$ could be considered a constructible class. For the construction functions we take a constant $f_0$ returning the trivial partial map with the empty domain, and a function $f_1 : A^{\Subset U} \times U \times A \to A^{\Subset U}$ such that $(f_1(w, u, a)) (v) = w(v)$ if $v \neq u$ and $(f_1(w, u, a)) (v) = a$ otherwise (in other words, we either extend the domain and define the new function on $u$ to return $a$, or override the value of the old function on $u$ by $a$).


We identify $2^*$ with the set $\N$ by some natural enumeration. This way, we may define the Kolmogorov complexity as a function on the set of natural numbers. Note that this definition changes only up to an $O(1)$ summand if we change the natural enumeration, due to monotonicity of the Kolmogorov complexity under computable functions, Inequality \ref{eq: first monotonicity}, and computational equivalence of any two natural enumerations, Proposition \ref{prop: equivalence of enumerations}. In the same vein, for any constructible class $U$ we may fix a natural enumeration $q$ and define the Kolmogorov complexity for $u \in U$ as $C(q(u))$. Again, the complexity changes only up to an $O(1)$ summand if we change the natural enumeration.

One important example is complexity of tuples. Let $U_1, \ldots U_k$ be constructible classes. 
We introduce a special notation $C(u_1, \ldots, u_k) = C((u_1, \ldots, u_k))$.


Let us sum up some important properties of the Kolmogorov complexity.

\begin{prop}\label{prop: compl properties}
The following holds for the Kolmogorov complexity.
\begin{enumerate}
\item\label{compl prop: trivial bound} For any $x \in 2^*$ we have $C(x) \leq \lvert x\rvert + O(1)$. For any natural number $n$ we have $C(n) \leq \log n + O(1)$. For any finite set $A$ that have at least two elements, we have $C(w) \leq \lvert w\rvert \log \lvert A\rvert + O(1)$, for each $w \in A^*$.
\item\label{compl prop: combinatorics} For each constructible class $U$ and  $k \in \N$ there are no more than $2^k$ of such $x \in U$ that $C(x) < k$.
\item\label{compl prop: monot} Let $U_1, U_2$ be constructible classes and let $f: U_1 \to U_2$ be a computable function. There is a constant $K$ such that $C(f(x)) < C(x) + K$ for any $x \in U_1$.
\item\label{compl prop: tuple} Let $U_1,\ldots, U_m$ be a finite collection of constructible classes. There is a constant $K$ such that 
\begin{equation*}
\begin{aligned}
C(u_1, \ldots, u_k) \leq &C(u_1)+ \ldots + C(u_k) +\\ &2\log(C(u_1))+ \ldots + 2\log(C(u_{k-1})) + K,
\end{aligned}
\end{equation*}
for any $(u_1, \ldots, u_k) \in U_1 \times \ldots U_k$.
\item\label{compl prop: small set compl} For any constructible class $U$ there is a constant $K$ such that for any finite subset $T$ of $U$ and any $u \in T$ we have
\[
C(u) \leq C(T) + 2 \log (C(T)) + \log\lvert T\rvert + K.
\]
\item\label{compl prop: computable bound} Let $U$ be a constructible class, and $f$ be a computable function from $2^*$ to $U$. Then $C(f(x)) \leq \lvert x\rvert + O(1)$ for any $x \in 2^*$. If $t$ is a computable function function from $\N$ to $U$, then $C(t(i)) \leq \log i + O(1)$ for any $i \in \N$.
\end{enumerate}
\end{prop}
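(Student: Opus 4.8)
The plan is to verify each of the six properties in turn, treating them as largely independent consequences of the optimality of the fixed universal function $f$ and the counting principle behind the definition of $C$. For property (\ref{compl prop: trivial bound}), I would exhibit explicit partial computable functions realizing each bound: for $C(x)\le\lvert x\rvert+O(1)$ take the identity on $2^*$ composed through $f$; for $C(n)\le\log n+O(1)$ use the standard binary encoding of an integer; and for strings over $A$ with $\lvert A\rvert\ge 2$ observe that a word of length $\lvert w\rvert$ carries $\lvert w\rvert\log\lvert A\rvert$ bits of information, so a decoder that reads a binary block of that length and reconstructs $w$ witnesses the bound, all up to the additive constant from Inequality \ref{eq: complexity optimality}. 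Property (\ref{compl prop: combinatorics}) is the pure counting fact already remarked in the text: there are at most $2^k$ programs of length $<k$, hence at most $2^k$ objects of complexity $<k$; transporting this to an arbitrary constructible class $U$ uses only that a natural enumeration is a computable injection into $\N$, so distinct elements have distinct codes. Property (\ref{compl prop: monot}) is exactly Inequality \ref{eq: first monotonicity} once we recall that a computable $f:U_1\to U_2$ between constructible classes becomes an ordinary partial computable function $2^*\to 2^*$ after fixing natural enumerations (Proposition \ref{prop: equivalence of enumerations}), so the displayed constant $K$ is the one furnished there.

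For the tuple bound (\ref{compl prop: tuple}) I would build a single decoder for the product class: to encode $(u_1,\ldots,u_k)$, concatenate shortest programs for each $u_j$, but prefix the length of each block (for $j<k$) with a self-delimiting code of that length, which costs the familiar $2\log(\cdot)$ overhead per block (e.g. doubling each bit of the length and ending with a separator). The final block needs no length marker because the decoder consumes whatever remains. Running $f$ on each block recovers the $u_j$, and the product class being constructible guarantees that assembling them into a tuple is computable, so optimality yields the stated inequality with a uniform $K$. Property (\ref{compl prop: small set compl}) then follows by composing: given a finite subset $T\Subset U$ and $u\in T$, we can describe $u$ by first giving $T$ (cost $C(T)$, self-delimited at cost $2\log(C(T))$) and then giving the index of $u$ within an enumeration of $T$ ordered by the natural enumeration of $U$, which costs $\log\lvert T\rvert$ bits; a decoder that reconstructs $T$, lists its elements, and selects the indicated one is computable, so the bound holds up to $K$. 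Finally property (\ref{compl prop: computable bound}) is immediate from (\ref{compl prop: monot}) combined with (\ref{compl prop: trivial bound}): if $g:2^*\to U$ is computable then $C(g(x))\le C(x)+O(1)\le\lvert x\rvert+O(1)$, and likewise $C(t(i))\le C(i)+O(1)\le\log i+O(1)$ for computable $t:\N\to U$.

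The only genuinely delicate step is the self-delimiting bookkeeping in property (\ref{compl prop: tuple}), where one must be careful that the $2\log$ overhead really suffices and that exactly $k-1$ length markers are needed; this is where an incautious argument might undercount the separators or misattribute the overhead to the wrong block. Everything else reduces to exhibiting a suitable partial computable decoder and invoking Inequality \ref{eq: complexity optimality}; the constructible-class machinery of Proposition \ref{prop: equivalence of enumerations} silently guarantees that passing between objects and their binary codes preserves computability, so I would not belabor it beyond a single remark.
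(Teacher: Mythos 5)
Your proposal is correct and follows essentially the same route as the paper's proof: each bound is witnessed by an explicit decoder and the optimality inequality, the tuple bound uses exactly the same self-delimiting length-prefix encoding with $k-1$ markers and a final unmarked block, and the small-set bound is obtained by describing $T$ together with the index of $u$ in an ordering of $T$. The only cosmetic difference is that the paper derives property (\ref{compl prop: small set compl}) by chaining properties (\ref{compl prop: monot}) and (\ref{compl prop: tuple}) applied to the pair $(T,i)$ rather than by building the composite decoder directly, which is the same argument in different packaging.
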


\begin{proof}

The first one easily follows from the definition of the Kolmogorov complexity via the comparison with the trivial function $f'(x) = x$. The variant for the natural numbers follows by considering the binary representation. The statement for the strings over finite alphabets follows easily by considering the lexicographical enumeration.

For the second one, consider the optimal function $f$ from the definition of the Kolmogorov complexity. Oviously, it assumes no more than $2^k$ values on the strings of length less than $k$. This implies the desired.

The third one is an easy consequence of Inequality \ref{eq: first monotonicity}.

We will prove the fourth one for binary strings. this would imply the statement in general. Let $U_1, \ldots, u_k$ be binary strings. Let $f: 2^* \to 2^*$ be the optimal function from the definition of the Kolmogorov complexity.
Let $y_1, \ldots, y_k$ be shortest preimages of respective $u_1, \ldots, u_k$. Let $t$ be a program that expects its input in the following form. First the binary representation of the length $\lvert u_1\rvert$ with each digit doubled, then the piece ``\verb|01|'', then string $y_1$, then the same goes for each $y_i$, $i = 2...k-1$, and in the end we just add string $y_k$. Program $t$ will read these, parse the strings $y_1, \ldots, y_k$, apply function $f$ to each, and apply a fixed natural enumeration function, thus recovering the index of the tuple $(u_1, \ldots, u_k)$. It is now easy to see that
\begin{equation*}
\begin{aligned}
C_t(u_1, \ldots, u_k) \leq &C(u_1)+ \ldots + C(u_k) +\\ &2\log(C(u_1))+ \ldots + 2\log(C(u_{k-1})) + 2(k-1).
\end{aligned}
\end{equation*}
Applying the definition of the Kolmogorov complexity, we obtain the desired inequality.

For the fifth one we again assume that our constructible class is the set binary strings. Let $t$ be a function from $\Fin(2^*) \times \N$  to $2^*$ such that $t(T, i)$ is the $i$-th in the lexicographical order element of $T \Subset 2^*$, if $i < \lvert T\rvert$, otherwise it is undefined. Note that for any finite $T$ and its element $u$ there is such $i < \lvert T\rvert$ that $t(T,i) = u$. Using previous properties, we observe that 
\[
C(u) = C(t(T,i)) + O(1) \leq C(T,i) + O(1) \leq C(T) + 2 \log (C(T)) + \log i  + O(1).
\]
This implies the desired.

The sixth property is a consequence of properties \ref{compl prop: trivial bound} and \ref{compl prop: monot}.

\end{proof}

We refer the reader to books \cite{LV97} and \cite{SUV} for additional details on the Kolmogorov complexity.


\subsection{Computable amenable groups}

Let $G$ be a countable group. A sequence $\F = (F_i)$ of finite subsets of group $G$, such that
\[
\lim_{i \to \infty}{ \frac{\lvert g F_i \setminus F_i \rvert}{\lvert F_i \rvert}} = 0
\]
holds for every $g \in G$, is called a {\em F\o lner sequence} for group $G$. A countable group is called an {\em amenable} group if it has a F\o lner sequence.

A F\o lner sequence is called {\em tempered} if there is a constant $K>0$ such that for all $i$ the following inequality is satisfied:
\begin{equation}
\left\lvert \bigcup_{j<i} F_j^{-1}F_i\right\rvert \leq  K \lvert F_i\rvert.
\end{equation}
We remind that from any F\o lner sequence, a tempered F\o lner subsequence could be refined (see \cite[Proposition 1.4]{L01}).

Let $G$ be a countable group. We will say that $G$ is a {\em computable group} if the set of its elements is the set of natural numbers, and the composition function $(g, h) \mapsto gh$ is computable. We may asssume also that $0$ is the identity element. We sometimes will use the order inherited from the set of natural numbers for computable groups. Note that a finitely-generated group is computable whenever the word problem for the group is decidable.
A {\em computable amenable group} is simply a computable group which is amenable. 
We can define the Kolmogorov complexity of a finite subset $F$ of $G$, since finite subsets form a constructible class (the class of finite subsets of natural numbers). We will say that a F\o lner sequence $(F_i)$ of a computable amenable group is {\em modest} if the following asymptotic bound holds:
\[
C(F_i) = o(\lvert F_i \rvert).
\]
This definition was given in my work \cite{A12}, where the  following result was also proved.
\begin{theor}\label{thm: modest exists}
Every computable amenable group has a modest F\o lner sequence.
\end{theor}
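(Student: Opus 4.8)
The plan is to realize the sequence as the output of an effective search, invoking amenability only to guarantee that this search always halts. Recall that the elements of $G$ are the natural numbers with $0$ the identity, and that the multiplication map is computable. For $S \Subset G$ and a rational $\varepsilon > 0$, call a nonempty $F \Subset G$ an $(S,\varepsilon)$\emph{-invariant} set if $\lvert g F \setminus F \rvert < \varepsilon \lvert F \rvert$ for every $g \in S$. Since $G$ is amenable, it carries a F\o lner sequence, and any sufficiently late member of it is $(S,\varepsilon)$-invariant; hence for every $S \Subset G$ and every rational $\varepsilon > 0$ an $(S,\varepsilon)$-invariant set exists.

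The first point is that the predicate ``$F$ is $(S,\varepsilon)$-invariant'' is decidable: because multiplication is computable, the finite set $gF$ can be listed for each $g \in S$, so $\lvert g F \setminus F\rvert$ and its comparison with $\varepsilon \lvert F \rvert$ are all computable. Now set $S_k = \{0,1,\ldots,k\}$ and $\varepsilon_k = 1/k$, and let $F_k$ be the first set, in the fixed natural enumeration of $\Fin(G)$, that is $(S_k,\varepsilon_k)$-invariant. By the previous paragraph such a set exists, so the unbounded search defining $F_k$ terminates, and $k \mapsto F_k$ is a total computable function $\N \to \Fin(G)$. By item \ref{compl prop: computable bound} of Proposition \ref{prop: compl properties} this immediately yields $C(F_k) \leq \log k + O(1)$.

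By construction $(F_k)$ is a F\o lner sequence: for fixed $g \in G$ we have $g \in S_k$ and $\varepsilon_k \to 0$ for all large $k$, so $\lvert g F_k \setminus F_k\rvert / \lvert F_k\rvert \to 0$. Moreover, for an infinite $G$ the sizes $\lvert F_k\rvert$ tend to infinity: if some value $F$ recurred for arbitrarily large $k$, then $\lvert g F \setminus F\rvert < \lvert F\rvert / k$ for all such $k$ would force $gF = F$ for every $g \in G$; but then $F$ would contain $Gf = G$ for any $f \in F$, which is impossible for a finite $F$ in an infinite group. It remains to upgrade $C(F_k) \leq \log k + O(1)$ to $o(\lvert F_k\rvert)$. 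For this I would pass to a computable subsequence: let $k_n$ be the least $k$ with $\lvert F_k\rvert \geq 2^n$ (well defined and computable since $\lvert F_k\rvert \to \infty$), and set $G_n = F_{k_n}$. Then $n \mapsto G_n$ is computable, so $C(G_n) \leq \log n + O(1)$, while $\lvert G_n\rvert \geq 2^n$; hence $C(G_n) = o(\lvert G_n\rvert)$, and $(G_n)$, being a subsequence of a F\o lner sequence, is itself F\o lner and therefore modest.

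The conceptual heart of the argument, and the step I would be most careful about, is the separation of two roles. Computability of $G$ does not by itself provide a computable F\o lner sequence, since amenability is not an effective property and we cannot in general decide it or exhibit invariant sets algorithmically. What saves us is that the two ingredients enter at different points: amenability supplies the (non-effective) existence of a witness, which guarantees termination, while computability of multiplication supplies the (effective) verifiability of invariance, which makes the search an algorithm. Once the search is known to halt, the complexity bound is automatic, and the only remaining technical care is the size growth needed to turn $O(\log k)$ into $o(\lvert F_k\rvert)$, handled by the subsequence above (the degenerate finite-group case being separate and trivial).
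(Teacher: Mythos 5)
Your proposal is correct and follows essentially the same route as the paper's sketch: an unbounded but effectively verifiable search for invariant sets, whose termination is guaranteed (non-effectively) by amenability, followed by the bound $C(F_k)\leq \log k + O(1)$ from Property \ref{compl prop: computable bound} of Proposition \ref{prop: compl properties}. The only real difference is that the paper folds the growth requirement $\lvert F_i\rvert > i$ directly into the search condition, which makes $\log i = o(\lvert F_i\rvert)$ immediate, whereas you derive growth afterwards and pass to a computable subsequence; on that point, your argument that $\lvert F_k\rvert \to \infty$ should address a bounded subsequence rather than a recurring value (if $\lvert F_k\rvert \leq M$ and $k > M$, then $\lvert gF_k\setminus F_k\rvert < M/k < 1$ forces $gF_k = F_k$ for all $g \in \{0,\dots,k\}$, so $F_k$ is invariant under the subgroup these generate and hence $\lvert F_k\rvert > M$), but this is a one-line repair.
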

\begin{proof}[Sketch of proof]
Let us fix an enumeration of finite subset of $G$. For $i$ let $F_i$ be the first subset such that $\lvert F_i \rvert > i$ and that $\lvert g F_i \setminus F_i \lvert < \lvert F_i \rvert /(i+1)$ for every $g < i$ (the order is the order of natural numbers). It is easy to see that this is a F\o lner sequence. It also follows from Proposition \ref{prop: compl properties}, Property \ref{compl prop: computable bound} that $C(F_i) \leq \log i + O(1)$ (note that the map $i \mapsto F_i$ is computable). So $C(F_i) = o(\lvert F_i\rvert)$.
\end{proof}
In Section \ref{sec: modest} we will introduce a useful geometric criterion for a F\o lner sequence to be modest, which works in a lot of situations.

\subsection{Actions and ergodic theory}
We refer the reader to books \cite{Gl03} and \cite{EW11} for the introduction to ergodic theory. Results concerning the ergodic theory of amenable groups could be found in \cite{Mou85}, \cite{OW87}, \cite{L01}, \cite{KL16}. 

Let $G$ be a countable group. We will denote by $1_G$ the identity element of the group. Let $A$ be a finite alphabet. The shift-action of group $G$ on the space $A^G$ is defined by
\[
(gx)(h) = x(hg),
\] 
for every $x \in A^G$ and $g,h \in G$. We endow the space $A^G$ with the product topology. It is easy to note that the group $G$ acts by homeomorphisms in the shift action which we defined. A {\em subshift} is any closed invariant subset of $A^G$. 
We will also use term {\em subshift} for the action given by restricting the action of $G$ to the closed invariant subset $X$. For any $F \Subset G$ we denote by $\pr_F$ the natural projection map from $A^G$ to $A^F$.

Consider two actions of a group $G$ on sets $X$ and $Y$.
A map $\pi: X \to Y$ is said to be equivariant if $\pi(gx) = g \pi(x)$ for all $g \in G$ and $x \in X$.

We denote $A^{\Subset G}$ the set of all partial maps from $G$ to $A$ with finite domains. We denote $\supp (t)$ the domain of $t$ for $t \in A^{\Subset G}$. 
We denote by $\cont(t)$ the word 
\[
(t(g_1), \ldots, t(g_{\lvert \supp(t)\rvert})),
\]
from $A^{\lvert \supp(t)\rvert}$, where $\supp(t) = \lbrace g_1, \ldots, g_{\lvert \supp(t)\rvert}\rbrace$, and $g_i$'s are listed in the increasing order.

Let $G$ be a countable group. By an {\em action} of $G$ on a standard probability space $(X,\mu)$ we will always mean a measurable and measure-preserving action. 
A measure preserving action is {\em ergodic} if all invariant sets have measure $0$ or $1$. An invariant measure is {\em ergodic} if the corresponding action is ergodic.

Suppose a countable group $G$ acts on two standard probability spaces $(X,\mu)$ and $(Y,\nu)$. A measure-preserving map $\pi: Y \to X$ is said to be a {\em factor map} if $g(\pi(y)) = \pi(g y)$ for $\nu$-a.e. $y \in Y$. It is not hard to see that there is a full-measure $G$-invariant subset $Y'$ of $T$ such that the restriction if $\pi$ to $X'$ is an equivariant map.
A factor map is called an {\em isomorphism} if it is one-to-one on a set of full measure. 

Suppose a countable group $G$ acts on a standard probability space $(X,\mu)$. Let $\bar{\pi}: X \to A$ be a measurable map to a finite set $A$. We can extend this map to $\pi: X \to A^G$ by the equality
\[
(\pi(x))(g) = \bar{\pi}(gx).
\]
The resulting map is equivariant. In fact, any measurable equivariant map $X \to A^G$ could be obtained in such a way. We will call the map $\bar{\pi}$ the {\em generator map} for the equivariant map $\pi$.
Consider an equivariant map $\pi: B^G \to A^G$ between two shift-actions. We will say that this map is a {\em cellular map} if there is a finite subset $M$ of $G$ called a {\em memory set} and a function $\tau: B^M \to A$ such that $(\pi(y))(1_G) = \tau(\pr_M(y))$ for every $y \in B^G$.
In fact, by the famous Curtis-Hedlund-Lyndon theorem, a map from $A^G$ to $B^G$ is a cellular map iff it is equivariant and continuous (see e.g. \cite[Theorem 1.8.1]{CSC10}). 

Importance of cellular map for our presentation stems from a fact that any factor map can be approximated by a cellular map.
\begin{prop}\label{prop: cellular approximation}
Let $\nu$ be a Borel invariant probability measure on $B ^G$. Let $\pi: B^G \to A^G$ be an equivariant map. 
For any $\varepsilon>0$ there is a cellular map $\pi_\varepsilon: B^G \to A^G$ such that $\nu(\lbrace y \in B^G \,: (\pi(y))(1_G) \neq (\pi_\varepsilon(y))(1_G)\rbrace) < \varepsilon$. 
\end{prop}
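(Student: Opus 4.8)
The plan is to pass from the equivariant map $\pi$ to its generator map and then reduce the statement to a classical approximation of Borel sets by cylinders. Recall that $\pi$ is encoded by its generator $\bar\pi \colon B^G \to A$, given by $\bar\pi(y) = (\pi(y))(1_G)$, and that a cellular map with memory set $M \Subset G$ and local rule $\tau \colon B^M \to A$ is precisely the equivariant map whose generator is $y \mapsto \tau(\pr_M(y))$. Since the displayed error set is exactly $\{\, y : \bar\pi(y) \neq \tau(\pr_M(y)) \,\}$, it suffices to produce a finite $M$ and a function $\tau$ making the $\nu$-measure of this set smaller than $\varepsilon$; the sought $\pi_\varepsilon$ is then the cellular map generated by $\tau$. (I tacitly use that $\pi$, equivalently $\bar\pi$, is measurable, so that this set is measurable in the first place.)

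First I would record the approximation fact. For $M \Subset G$ let $\mathcal{F}_M = \pr_M^{-1}(2^{B^M})$ be the finite $\sigma$-algebra of cylinders depending only on coordinates in $M$. As $M$ ranges over finite subsets of $G$ these form an increasing family whose union is an algebra generating the Borel $\sigma$-algebra of $B^G$. Hence, by the standard approximation lemma for a $\sigma$-algebra generated by an algebra, for every Borel set $E$ and every $\delta > 0$ there is a set $E' \in \bigcup_M \mathcal{F}_M$ with $\nu(E \triangle E') < \delta$.

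Next I would apply this to the level sets of the generator. Put $E_a = \bar\pi^{-1}(a)$ for $a \in A$; these form a finite measurable partition of $B^G$. Choosing $\delta = \varepsilon/(2|A|)$, pick for each $a$ a cylinder set $E'_a$ with $\nu(E_a \triangle E'_a) < \delta$, and take $M$ large enough that every $E'_a$ lies in $\mathcal{F}_M$, say $E'_a = \pr_M^{-1}(W_a)$ with $W_a \subseteq B^M$. Define $\tau \colon B^M \to A$ by $\tau(w) = a$ when $w$ lies in $W_a$ and in no other $W_{a'}$, and $\tau(w) = a_0$ (a fixed default) otherwise. The crux is a purely set-theoretic inclusion: if $y \notin \bigcup_a (E_a \triangle E'_a)$ and $\bar\pi(y) = a$, then $y \in E'_a$ while $y \notin E'_{a'}$ for all $a' \neq a$, so $\pr_M(y) \in W_a$ unambiguously and $\tau(\pr_M(y)) = a = \bar\pi(y)$. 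Consequently $\{\, y : \bar\pi(y) \neq \tau(\pr_M(y)) \,\} \subseteq \bigcup_a (E_a \triangle E'_a)$, whence the error has $\nu$-measure at most $\sum_a \nu(E_a \triangle E'_a) < |A|\,\delta = \varepsilon/2 < \varepsilon$.

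I do not expect a serious obstacle here: the only substantive ingredient is the approximation of Borel sets by cylinder sets, which is classical, and the rest is bookkeeping. The mild points requiring care are the measurability of $\pi$ (so the error set is measurable) and the handling of the selection rule for $\tau$, so that both the event ``$y$ is misclassified into a wrong cell'' and the event ``$y$ falls into an ambiguous or empty cell'' are absorbed into the union of symmetric differences; the inclusion above dispatches both at once.
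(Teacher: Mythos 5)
Your proof is correct and follows essentially the same route as the paper: both pass to the generator map $\bar\pi$, approximate its level sets by cylinder (equivalently, clopen) sets up to measure $\varepsilon/(2|A|)$ each, resolve the overlaps to get a genuine local rule $\tau$ on $B^M$, and bound the error set by the union of the symmetric differences. The only cosmetic difference is that the paper disambiguates by taking ordered set differences of the approximating clopen sets, while you use a ``unique membership or default'' rule; both work equally well.
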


\begin{proof}
Consider the map $\bar{\pi}: B^G \to A$ defined as $\bar{\pi}(y) = (\pi(y))(1_G)$. 
For each $a \in A$ denote $W_a$ the preimage of $a$ under the map $\bar{\pi}$. By the Borel regularity and since the clopen sets form a basis for the topology on $B^G$, we can find for each $a \in A$ such a clopen set $W_a''$ that $\nu(W_a \Delta W''_a) < \varepsilon/(100\lvert A \rvert)^2$. Let us now order elements of $A$ in some way: $A = \lbrace a_1, \ldots, a_n \rbrace$. Next we put $W'_{a_i} = W''_{a_i} \setminus \bigcup_{j < i} W''_{a_j}$, for $i < n$, and $W'_{a_n} = B^G \setminus \bigcup_{i<n}W'_{a_i} =  B^G \setminus \bigcup_{i<n}W''_{a_i}$ . It  is now clear that all $W'_{a_i}$ are clopen, pairwise disjoint, and their union is the whole space $B^G$. We also note that $\nu(W'_{a_i} \Delta W_{a_i}) < \varepsilon/(2 \lvert A\rvert)$ for each $i = 1, \ldots, n$. We now define a map $\bar{\pi}_{\varepsilon}$ by $\bar{\pi}_{\varepsilon}(y) = a_i$, if $y \in W'_{a_i}$, and induce a cellular map $\pi_{\varepsilon}$ from it.
\end{proof}

Let $X$ be a standard Borel space or standard probability space. A {\em partition} of $X$ is a finite or countable collection of measurable subsets of $X$ whose union is the whole set. 
For two partitions $\alpha$ and $\beta$ we denote $\alpha \vee \beta$ the partition ${\lbrace U \cap V, \;\text{where } U \in \alpha, V\in \beta, U \cap V \neq \varnothing\rbrace}$.

Consider an action of a countable group $G$ on a standard probability space $(X,\mu)$. For a partition $\alpha$ of $X$ and an element $g \in G$ we denote $\alpha^g$ the partition $\lbrace g^{-1}U ,\;\text{where } U\in \alpha \rbrace$. For a finite subset $F$ of $G$, we denote $\alpha^F$ the partition $\bigvee_{g \in F} \alpha^g$.
Partition $\alpha$ is said to be generating if the smallest $\sigma$-subalgebra containing $\alpha^g$ for each $g \in G$ is (modulo sets of mesure $0$) the algebra of all measurable subsets of $X$. The following condition is well-known to be equivalent: there is a subset $X' \subset X$ of full measure such that for every pair $x_1, x_2 \in X'$ with $x_1 \neq x_2$ there is an element $g \in G$ such that $gx_1$ and $gx_2$ belong to different pieces of $\alpha$ (see e.g. \cite[Lemma 2.1]{Se15}). 

A probability vector is a countable or finite sequence of non-negative numbers whose sum is $1$. Let $(p_i)$ be a probability vector. Its Shannon entropy is defined as
\[
-\sum_{i} p_i \log(p_i), 
\]
with the convention $0 \log 0 = 0$, and denoted by $H(p)$.
Any partition of a probability space defines a probability vector. For a partition $\alpha$ we denote $H(\alpha)$ the entropy of the corresponding probability vector.

Let a countable amenable group $G$ act on a standard probability space $(X,\mu)$, let $\alpha$ be a partition of finite Shannon entropy. Let $(F_i)$ be a F\o lner sequence for $G$.
We denote
\[
\h_{G}(\alpha,X,\mu) = \lim_{i \to \infty}\frac{H(\alpha^{F_i})}{\lvert F_i\rvert}.
\] 
It is well known that this limit does not depend on the choice of the F\o lner sequence (see \cite[Chapter 9.3, remarks before Definition 9.3]{KL16}). 
The Kolmogorov-Sinai entropy $\h_G(X,\mu)$ of the action is defined by the formula 
\[
\sup_{H(\alpha)<\infty}\h_G(\alpha,X,\mu),
\]
the supremum is taken along all the partitions with finite entropy. By the Kolmogorov-Sinai theorem, we have 
\[
\h_G(X,\mu) = \h_G(\alpha,X,\mu),
\]
for any finite generating partition $\alpha$, see \cite[Theorem 9.8]{KL16}.

Consider the shift-action of a countable group $G$ on $A^G$. Preimages of elements of $A$ under the map $x \mapsto x(e)$ from $A^G$ to $A$ constitute a partition. This partition is called the canonical alphabet generating partition.

Let a countable group $G$ act on a standard probability space $(X,\mu)$. Suppose $\bar{\psi}$ is a measurable map from $X$ to a finite set $A$. Let $\alpha'$ be the corresponding partition of $X$ (namely, the partition consisting of preimages under $\bar{\psi}$ of elements in $A$). Suppose $\alpha'$ is a generating partition. As we already noticed, the extended map $\pi: X \to A^G$ is a bijection on a subset of full measure. We endow now $A^G$ with the push-forward measure $\nu = \psi(\mu)$. It follows that $\psi$ is an isomorphism of $G$-actions. Let $\pi$ be an inverse of $\psi$ on a conull set, we can assume that this map is Borel ($\psi$ is a Borel map on a Borel set of full measure, an inverse of a Borel one-to-one map is a Borel map, see \cite[Corollary 15.2]{Ke95}). So $\pi$ is an isomorphism from $(A^G, \nu)$ to $(X,\mu)$. This map is also an isomorphism of actions. Alltogether we constructed a {\em symbolic representation}, corresponding to partition $\alpha'$, of the action of $G$ on $(X,\mu)$.

An action of a countable group on a standard probability space is called {\em essentially free} if it is free on the orbit of almost every point.

The following generating partition result was proved by Seward, \cite[Corollary 1.3]{Se19} (see also \cite[Corollary 2.9]{SeTD}):

\begin{theor}\label{thm: small entropy generator}
Consider an essentially free ergodic action of a countable amenable group $G$ on a standard probability space $(X,\mu)$. If $\h_G(X,\mu) < + \infty$, then for every $\varepsilon > 0$ there is a finite alphabet $A$ and a symbolic representation of this action by the shift-action on $(A^G, \nu)$ such that the canonical alphabet generating partition has entropy smaller than $\h_G(X,\mu)+ \varepsilon$.
\end{theor}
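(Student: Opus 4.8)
Since this statement is quoted from Seward \cite{Se19} and the lower bound is the easy half, I would first isolate its real content. For any generating partition $\alpha$ the Kolmogorov--Sinai theorem gives $H(\alpha) \ge \h_G(\alpha,X,\mu) = \h_G(X,\mu)$, so the infimum of $H(\alpha)$ over finite generating partitions is at least $h := \h_G(X,\mu)$; the theorem asserts that this infimum is also $\le h$, i.e. that \emph{entropy-efficient} finite generators exist. The plan is a Krieger-type coding argument. I would start not from scratch but from a finite generating partition $\beta$, whose existence follows from the cruder (unrefined) finite generator theorem for amenable group actions; here $H(\beta)$ may be as large as $\log\lvert\beta\rvert$, and the whole point of the refinement is to re-encode the $\beta$-names into a new finite alphabet $A$ whose per-site Shannon entropy is pushed down to near $h$.

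The engine of the re-encoding is the Shannon--McMillan--Breiman theorem for amenable actions together with the Ornstein--Weiss quasi-tiling machinery \cite{OW87} (or the exact finite tilings of \cite{DHZ19}). Fixing a large Følner shape $F$ from a tempered Følner sequence, SMB tells us that all but an $\varepsilon$-fraction of the measure sits on roughly $e^{\lvert F\rvert h}$ atoms of $\beta^{F}$, so these ``typical'' patterns can be addressed by binary strings of length about $\lvert F\rvert h/\log 2$. I would then quasi-tile almost all of $G$ by translates of $F$ (using finitely many shapes if necessary) and define a new partition $\alpha$ recording at each site: (i) its combinatorial role in the tiling --- whether it is a tile-center marker and its position within its tile; and (ii) a fixed share of the efficient binary code of that tile's $\beta^{F}$-pattern. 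Since the marker/tiling overhead and the atypical-or-uncovered part each cost only $o(\lvert F\rvert)$ or $\varepsilon\lvert F\rvert$ bits per tile, spreading the $\approx \lvert F\rvert h/\log 2$ informative bits over the $\lvert F\rvert$ sites of a tile yields $H(\alpha) \le h + \varepsilon$ once $F$ is large and the tiling is good.

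The delicate half is showing that this efficient $\alpha$ is genuinely generating, not merely approximately so. From an $\alpha$-name the decoder must first recover the tiling --- which is exactly why the marker and position data are built into the alphabet and must be decodable equivariantly and $\mu$-a.e.\ --- and then recover the full $\beta^{F}$-pattern on each tile, hence the $\beta$-name almost everywhere, since $\beta$ generates. The exceptional set (atypical atoms together with the region of $G$ left uncovered by the quasi-tiling) must be treated so that it does not spoil generation. The standard device is a multi-scale refinement: one codes the exceptional region at a finer Følner scale $F'$ with an additional symbol layer of vanishing density, and iterates, so that every point is eventually correctly decoded while the total added entropy stays below $\varepsilon$.

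I expect the main obstacle to be precisely this tension between efficiency and generation under the weak combinatorics available for general amenable groups. Because such groups need not monotile, one cannot lean on a single clean tiling, and the measurable, equivariant bookkeeping of an Ornstein--Weiss quasi-tiling --- its markers, remainders, and the recursive coding of the exceptional set --- while keeping the entropy overhead summably small is where the genuine difficulty lies. For that reason a fully rigorous treatment seems to require the Rokhlin-entropy framework developed in \cite{Se19} and \cite{SeTD}, rather than a short self-contained argument, which is why I would ultimately cite the result as above rather than reproduce the proof.
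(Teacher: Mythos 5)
The paper does not prove this theorem at all: it is imported verbatim as \cite[Corollary 1.3]{Se19} (see also \cite[Corollary 2.9]{SeTD}), so your ultimate decision to cite Seward rather than reproduce the argument is exactly the paper's own treatment. Your sketch of the underlying Krieger/Ornstein--Weiss coding strategy and your identification of where the real difficulty lies (making the efficient partition genuinely generating via multi-scale handling of the exceptional set) are a fair account of what happens inside \cite{Se19}, but nothing beyond the citation is needed here.
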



\subsection{Topological entropy}
For a compact metric space $(X,l)$ we denote $N_{\varepsilon}(X,l)$ the size of a mininmal $\varepsilon$-spanning net. 

Let $X$ be a compact metrizable space. Let $G$ be a countable amenable group acting by homeomorphisms on $X$.
Let us fix a compatible metric $l$ on $X$.  Suppose $S$ is a finite subset of $G$; we denote 
\[
l^S(x,y) = \sup_{g \in S} l(gx, gy),
\]
for $x,y \in X$. Let us fix a F\o lner sequence $(F_i)$ for $G$.
{\em Topological entropy} of the action is defined as
\[
\sup_{\varepsilon>0} \limsup_{i \to \infty} \frac{\log N_{\varepsilon}(X,l^{F_i})}{\lvert F_i\rvert}
\] and denoted by $\h^{top}_G(X)$. It is well known that this does not depend on the choice of the compatible metric and F\o lner sequence (see \cite[Theorem 9.39]{KL16}).

For a subshift $X \subset A^G$ there is the following equivalent definition of the topological entropy (see \cite[Example 9.4]{KL16}):

\[
\h^{top}_G(X) = \lim_{i \to \infty} \frac{\log \lvert \pr_{F_i}(X) \rvert}{\lvert F_i \rvert}.
\]

We will need the following implication of the variational principle.

\begin{theor}\label{thm: variational}
Let $A$ be a finite set and $G$ be an amenable group, let $X \subset A^G$ be a subshift. There is such an invariant Borel probability measure $\mu$ on $X$ that 
\[
\h_G(X,\mu) = \h^{top}_G(X).
\]
\end{theor}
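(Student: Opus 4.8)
The plan is to deduce the statement from the variational principle together with an attainment argument based on upper semicontinuity of the entropy functional. The variational principle for amenable group actions (see \cite{KL16}) gives
\[
\h^{top}_G(X) = \sup_{\mu \in M_G(X)} \h_G(X,\mu),
\]
where $M_G(X)$ denotes the set of all $G$-invariant Borel probability measures on $X$; this set is non-empty (averaging any point mass along a F\o lner sequence and passing to a weak-$*$ limit produces an invariant measure), convex, and weak-$*$ compact. Hence it suffices to show that $\mu \mapsto \h_G(X,\mu)$ is upper semicontinuous on $M_G(X)$: an upper semicontinuous function on a non-empty compact space attains its supremum, and by the displayed equality that supremal value is exactly $\h^{top}_G(X)$.

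To prove upper semicontinuity I would work with the canonical alphabet generating partition $\alpha$, whose cells are the clopen sets $\lbrace x : x(1_G) = a\rbrace$, $a \in A$. This partition is generating: if $x \neq y$ then $x(h) \neq y(h)$ for some $h$, and $(hx)(1_G) = x(h) \neq y(h) = (hy)(1_G)$, so $hx$ and $hy$ lie in different cells of $\alpha$ (cf. \cite[Lemma 2.1]{Se15}). By the Kolmogorov--Sinai theorem we therefore have $\h_G(X,\mu) = \h_G(\alpha,X,\mu)$ for every $\mu \in M_G(X)$. The set function $F \mapsto H_\mu(\alpha^F)$ is monotone, subadditive (since $\alpha^{E \cup F} = \alpha^E \vee \alpha^F$ and $H_\mu(\beta \vee \gamma) \leq H_\mu(\beta) + H_\mu(\gamma)$), and right-invariant under $G$ by invariance of $\mu$, so by the Ornstein--Weiss subadditive convergence theorem (see \cite{OW87}, \cite{L01}) its normalized limit along any F\o lner sequence coincides with the infimum, giving the representation
\[
\h_G(\alpha,X,\mu) = \inf_{F \Subset G} \frac{H_\mu(\alpha^F)}{\lvert F\rvert}.
\]

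For each fixed $F \Subset G$ the partition $\alpha^F$ consists of finitely many clopen subsets of $X$, so $\mu \mapsto \mu(C)$ is weak-$*$ continuous for every cell $C$, and since $H$ is a continuous function of the resulting finite probability vector, the functional $\mu \mapsto H_\mu(\alpha^F)$ is weak-$*$ continuous. An infimum of continuous functions is upper semicontinuous, which yields the required upper semicontinuity of $\mu \mapsto \h_G(X,\mu)$ and completes the argument. The main obstacle is the infimum representation in the last display: it rests on the Ornstein--Weiss theorem for amenable groups, and one must verify that $H_\mu(\alpha^F)$ genuinely satisfies its hypotheses (monotonicity, strong subadditivity, and right-invariance) so that the limit defining the entropy may legitimately be replaced by an infimum of the continuous functionals above. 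The remaining passage, from the pointwise supremum of the variational principle to an attained maximum, is a soft compactness argument.
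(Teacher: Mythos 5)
Your argument is correct and follows essentially the same route as the paper, which simply cites the variational principle together with upper semicontinuity of measure entropy for expansive systems (referring to Chapter 5 of \cite{Mou85}); your use of the clopen generating partition $\alpha$ and the infimum representation of $\h_G(\alpha,X,\mu)$ is precisely the standard way that upper semicontinuity is established in the expansive (subshift) case. The details you supply --- non-emptiness and weak-$*$ compactness of $M_G(X)$, the Ornstein--Weiss infimum formula, and continuity of $\mu \mapsto H_\mu(\alpha^F)$ for clopen cells --- are all sound.
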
 
\begin{proof}
This follows from the variational principle together with the upper-semicontinuity of measure entropy as a function on the set of invariant measures for an expansive dynamical system. See Chapter 5 from \cite{Mou85}.
\end{proof}

\subsection{Asymptotic complexity}
Let $A$ be a finite alphabet. Consider the shift-action of a computable amenable group $G$ on $A^G$. Let $\F=(F_i)$ be a F\o lner sequence. 
As we discussed earlier, all partial maps from $G$ to $A$ with finite domains form naturally a constructible class.
The {upper asymptotic complexity} of a point $x \in A^G$ relative to $\F$ is defined as 
\[
\limsup_{i \to \infty} \frac{C(\pr_{F_i}(x))}{\lvert F_i\rvert}
\]
and denoted by $\uacf(x)$. 
The {lower asymptotic complexity} of a point $x \in A^G$ relative to $\F$ is defined as 
\[
\liminf_{i \to \infty} \frac{C(\pr_{F_i}(x))}{\lvert F_i\rvert}
\]
and denoted by $\lacf(x)$.

\begin{prop}\label{prop: asymtotic complexity equivalent def}
If $\F=(F_i)$ is a modest F\o lner sequence, then
\[
\uacf(x) = \limsup_{i \to \infty} \frac{C(\cont(\pr_{F_i}(x)))}{\lvert F_i\rvert}
\]
for every $x \in A^G$; analogous equality holds for the lower asymptotic complexity. 
\end{prop}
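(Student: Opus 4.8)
The plan is to show that the two sequences
\[
a_i = \frac{C(\pr_{F_i}(x))}{\lvert F_i\rvert}, \qquad b_i = \frac{C(\cont(\pr_{F_i}(x)))}{\lvert F_i\rvert}
\]
differ by a quantity tending to $0$, that is, $a_i - b_i \to 0$. Once this is established, $\limsup_i a_i = \limsup_i b_i$ and $\liminf_i a_i = \liminf_i b_i$ follow at once, which yields both the stated equality for $\uacf$ and the analogous one for $\lacf$ simultaneously. Thus it suffices to produce term-wise inequalities in both directions whose error is $o(\lvert F_i\rvert)$.

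For the inequality $b_i \le a_i + o(1)$, I would observe that $\cont$ is a computable map from the constructible class $A^{\Subset G}$ of finite-domain partial maps to the constructible class $A^*$: given a partial map $t$, one enumerates $\supp(t)$ in the increasing order inherited from $\N$ (this order is decidable since $G$ is a computable group) and outputs the word of values. Property \ref{compl prop: monot} of Proposition \ref{prop: compl properties} then gives $C(\cont(\pr_{F_i}(x))) \le C(\pr_{F_i}(x)) + O(1)$, and dividing by $\lvert F_i\rvert$ yields $b_i \le a_i + O(1/\lvert F_i\rvert)$.

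For the reverse inequality I would exhibit a computable reconstruction map. Define $f \colon \Fin(G) \times A^* \to A^{\Subset G}$ by letting $f(F,w)$ be the partial map with domain $F$ whose value on the $j$-th smallest element of $F$ is the $j$-th letter of $w$ (defined when $\lvert F\rvert = \lvert w\rvert$). Then $\pr_{F_i}(x) = f(F_i, \cont(\pr_{F_i}(x)))$, so Property \ref{compl prop: monot} together with the tuple bound (Property \ref{compl prop: tuple} with $k=2$) gives
\[
C(\pr_{F_i}(x)) \le C(F_i) + C(\cont(\pr_{F_i}(x))) + 2\log\!\big(C(F_i)\big) + O(1).
\]
Here modesty enters: $C(F_i) = o(\lvert F_i\rvert)$ by hypothesis, and since $\lvert F_i\rvert \to \infty$ this also forces $\log(C(F_i)) = o(\lvert F_i\rvert)$. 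Dividing by $\lvert F_i\rvert$ therefore yields $a_i \le b_i + o(1)$, completing the two-sided estimate.

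The only real obstacle is the presence of the domain $F_i$ as the side information needed to pass from the content word back to the partial map; this is exactly what modesty controls, and without it the two complexities could genuinely differ by a positive multiple of $\lvert F_i\rvert$, since the set $F_i$ could itself carry complexity of order $\lvert F_i\rvert$. I do not anticipate any difficulty in the routine checks that $\cont$ and $f$ are computable with respect to the natural enumerations, these being guaranteed by the constructible-class formalism of Subsection \ref{subsec: computability and kolmogorov}.
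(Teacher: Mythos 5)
Your proposal is correct and follows essentially the same route as the paper: one direction via computability of $\cont$ and Property \ref{compl prop: monot}, the other via a computable reconstruction map on $\Fin(G)\times A^*$ combined with the tuple bound and modesty of $(F_i)$. Your framing through the termwise estimate $\lvert a_i-b_i\rvert\to 0$ is just a slightly cleaner way of packaging the same inequalities so that the $\limsup$ and $\liminf$ statements follow simultaneously.
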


\begin{proof}
Note that $\cont: A^{\Subset G} \to A^*$ is a computable function, so
\[
\frac{C(\cont(\pr_{F_i}(x)))}{\lvert F_i\rvert} \leq \frac{C(\pr_{F_i}(x)) + O(1)}{\lvert F_i\rvert},
\]
which implies that
\[
\limsup_{i \to \infty} \frac{C(\cont(\pr_{F_i}(x)))}{\lvert F_i\rvert} \leq \limsup_{i \to \infty} \frac{C(\pr_{F_i}(x))}{\lvert F_i\rvert}.
\]
Now let $q: Fin(G) \times A^* \to A^{\Subset G}$ be a partial computable function such that $t = q(\dom t, \cont t)$ for each $t \in A^{\Subset G}$ (it constructs a partial map from a finite subset and a string of matching size).
It is easy to note that 

\begin{multline*}
\frac{C(\pr_{F_i}(x))}{\lvert F_i\rvert} = \frac{C(q(F_i,\cont(\pr_{F_i}(x))))}{\lvert F_i\rvert}\\ \leq \frac{C(F_i) + C(\cont(\pr_{F_i}(x))) + 2 \log C(F_i) + O(1)}{\lvert F_i\rvert}\\ = \frac{o(\lvert F_i\rvert) + C(\cont(\pr_{F_i}(x))) + 2 \log o(\lvert F_i\rvert) + O(1)}{\lvert F_i\rvert}.
\end{multline*}
Thus we have 
\[
\limsup_{i \to \infty} \frac{C(\pr_{F_i}(x))}{\lvert F_i\rvert} \leq\limsup_{i \to \infty} \frac{C(\cont(\pr_{F_i}(x)))}{\lvert F_i\rvert}.
\]

\end{proof}

\section{Reflections concerning modest F\o lner sequences}\label{sec: modest}

We remind that a F\o lner sequence $\F = (F_i)$ is called modest if $C(F_i) = o(\lvert F_i \rvert)$.

The following proposition will be needed to apply the Borel-Cantelli lemma in the proof of Proposition \ref{prop: measure lower bound}. 
\begin{prop}\label{prop: modest convergence}
Let $(F_i)$ be a modest F\o lner sequence in a computable amenable group $G$. If $F_i \neq F_j$ for  every pair $(i,j)$ with $i \neq j$, then for every $\varepsilon> 0$ the following serie converges:
\[
\sum_{i \in \N} {2^{-\varepsilon \lvert F_i \rvert}}.
\]
\end{prop}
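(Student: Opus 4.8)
The plan is to regroup the series according to the cardinality of the sets $F_i$ and to control, for each value $n$, how many indices $i$ can satisfy $\lvert F_i\rvert = n$. The crucial point is that distinctness together with modesty limits this multiplicity: a modest sequence cannot revisit too many sets of the same small size, because low-complexity objects are scarce. This scarcity is exactly the combinatorial bound of Proposition \ref{prop: compl properties}, Property \ref{compl prop: combinatorics}.

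First I would fix $\varepsilon > 0$ and invoke modesty. Since $C(F_i) = o(\lvert F_i\rvert)$, there is an index $N$ such that $C(F_i) \leq \tfrac{\varepsilon}{2}\lvert F_i\rvert$ for all $i \geq N$. The finitely many terms with $i < N$ contribute only a finite amount, so it suffices to bound the tail $\sum_{i \geq N} 2^{-\varepsilon \lvert F_i\rvert}$.

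Next, for each positive integer $n$ I would estimate the cardinality of the set $\{i \geq N : \lvert F_i\rvert = n\}$. By hypothesis the $F_i$ are pairwise distinct, so this cardinality is at most the number of distinct finite subsets $F$ of $G$ with $C(F) \leq \tfrac{\varepsilon}{2}n$. Finite subsets of $G$ form a constructible class, so by Proposition \ref{prop: compl properties}, Property \ref{compl prop: combinatorics}, the number of elements of this class whose complexity is below $\tfrac{\varepsilon}{2}n + 1$ is at most $2^{\frac{\varepsilon}{2}n + 1}$. Hence $\#\{i \geq N : \lvert F_i\rvert = n\} \leq 2^{\frac{\varepsilon}{2}n + 1}$.

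Finally I would sum over $n$. Regrouping the tail according to the common value $n = \lvert F_i\rvert$ gives
\[
\sum_{i \geq N} 2^{-\varepsilon \lvert F_i\rvert} = \sum_{n \geq 1} \#\{i \geq N : \lvert F_i\rvert = n\}\, 2^{-\varepsilon n} \leq \sum_{n \geq 1} 2^{\frac{\varepsilon}{2}n + 1}\, 2^{-\varepsilon n} = 2 \sum_{n \geq 1} 2^{-\frac{\varepsilon}{2}n},
\]
which is a convergent geometric series. Adding back the finitely many initial terms yields convergence of the full series. I do not expect a genuine obstacle here: the only real idea is the regrouping combined with the counting bound on low-complexity objects, and once distinctness is used to pass from indices to distinct sets the remaining estimate is routine.
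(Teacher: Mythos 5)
Your proof is correct and follows essentially the same route as the paper's: discard a finite initial segment using modesty, regroup the sum by the cardinality $n=\lvert F_i\rvert$, and use pairwise distinctness together with the counting bound of Proposition \ref{prop: compl properties}, Property \ref{compl prop: combinatorics} to bound the multiplicity by $2^{\varepsilon n/2+O(1)}$, leaving a convergent geometric series. No issues.
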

\begin{proof}
Throwing away a finite initial segment of the F\o lner sequence, we may assume that $C(F_i) < \varepsilon/2 \cdot \lvert F_i \rvert$ for every $i$. We rearrange the sum:
\[
\sum_{k \in \N} 2^{- \varepsilon k}\lvert \lbrace i : \lvert F_i \rvert = k \rbrace\rvert.
\]
For each $F_i$ with $\lvert F_i\rvert = k$ we have $C(F_i)< \varepsilon k /2$; so for each $k$ we have $\lvert \lbrace i : \lvert F_i \rvert = k \rbrace\rvert \leq 2^{\varepsilon k /2}$. This means that the sum is bounded by the geometric series:
\[
\sum_{k \in \N} 2^{-\varepsilon k / 2}.
\]
\end{proof}

The next theorem provides a geometric criterion for a F\o lner sequence to be modest. We note that it is not used in the sequel, but shows that main results of this paper are applicable in a wide variety of situations.
\begin{theor}\label{thm: geometric criterion}
Suppose $G$ is a computable amenable group which is finitely generated. Let us fix a finite symmetric generating set $S$. This defines the Cayley graph structure on $G$. Let $(F_i)$ be a F\o lner sequence. If each $F_i$ is an edge-connected subset of the Cayley graph and contains the group identity, then $(F_i)$ is a modest F\o lner sequence.
\end{theor}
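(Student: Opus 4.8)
The plan is to reduce the statement to the fact that, for a finitely generated group with symmetric generating set $S$, the Følner property of $(F_i)$ says precisely that the edge boundary of $F_i$ is asymptotically negligible, and then to exhibit a short description of a connected set in terms of its boundary. I will work with the Cayley graph in which $g$ is joined to $sg$ for $s\in S$; since $S$ is symmetric this is an undirected $\lvert S\rvert$-regular graph, and it is the handedness for which the stated condition $\lvert gF_i\setminus F_i\rvert = o(\lvert F_i\rvert)$ directly bounds the boundary. Writing $m_i$ for the number of edges leaving $F_i$, one has $m_i = \sum_{s\in S}\lvert F_i\setminus s^{-1}F_i\rvert = \sum_{s\in S}\lvert s^{-1}F_i\setminus F_i\rvert$ (the two displayed cardinalities agree since $\lvert s^{-1}F_i\rvert=\lvert F_i\rvert$), and each summand is $o(\lvert F_i\rvert)$ by the Følner property applied to $g=s^{-1}$, so $m_i=o(\lvert F_i\rvert)$.

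The naive description of a connected $F\ni 1_G$ by a spanning tree rooted at $1_G$ records one generator per vertex and costs $\Theta(\lvert F\rvert)$ bits, enough only for $C(F_i)=O(\lvert F_i\rvert)$; since there are already exponentially many connected sets of a given size, connectedness alone cannot yield a sublinear bound. The idea is instead to pay \emph{nothing} for the interior and to describe only the boundary. I will encode $F$ by a deterministic flood fill: the decoder, which knows $G$ (a computable group), $1_G$, and $S$ ordered by the underlying enumeration, runs a breadth-first search from $1_G$; whenever it dequeues a vertex $g$ already known to lie in $F$ it examines the neighbours $sg$ in the fixed order and must decide, for each, whether $sg\in F$. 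These successive decisions form a binary sequence whose negative entries are exactly the edges leaving $F$, so the number of negative decisions is exactly $m_i=o(\lvert F_i\rvert)$, while the total number of decisions is $\lvert S\rvert\,\lvert F_i\rvert$. Because $F$ is connected the search reaches all of $F$, and because each vertex is recomputed from the group operation, the reconstruction is fully determined by the sequence of decisions alone, with no per-vertex label required.

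Accordingly, let $Z_i\subseteq\{1,\dots,\lvert S\rvert\lvert F_i\rvert\}$ be the set of positions of the negative decisions; then $Z_i$ is a finite set of natural numbers and the flood-fill decoder is a computable map $\Fin(\N)\to\Fin(G)$ sending $Z_i$ to $F_i$. By monotonicity of complexity under computable maps (Proposition \ref{prop: compl properties}, Property \ref{compl prop: monot}), $C(F_i)\le C(Z_i)+O(1)$. To bound $C(Z_i)$ I specify the ambient length $\lvert S\rvert\lvert F_i\rvert$ and the size $m_i$ (together $O(\log\lvert F_i\rvert)$ bits) followed by the index of $Z_i$ among all $m_i$-subsets of a set of size $\lvert S\rvert\lvert F_i\rvert$, giving
\[
C(Z_i)\le \log\binom{\lvert S\rvert\lvert F_i\rvert}{m_i} + O(\log\lvert F_i\rvert)\le m_i\log\!\Big(\tfrac{e\lvert S\rvert\lvert F_i\rvert}{m_i}\Big)+O(\log\lvert F_i\rvert).
\]
Writing $m_i=\varepsilon_i\lvert F_i\rvert$ with $\varepsilon_i\to 0$, the right-hand side equals $\varepsilon_i\lvert F_i\rvert\big(\log(e\lvert S\rvert)+\log(1/\varepsilon_i)\big)+O(\log\lvert F_i\rvert)=o(\lvert F_i\rvert)$, since $\varepsilon_i\log(1/\varepsilon_i)\to 0$. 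Hence $C(F_i)=o(\lvert F_i\rvert)$, i.e. $(F_i)$ is modest.

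The main conceptual step, and the one I expect to need the most care, is the observation that the interior of a connected region is \emph{free} in the flood-fill description, so that only the $o(\lvert F_i\rvert)$ boundary decisions must be paid for; encoding the boundary directly as a set of group elements would be far too expensive, since those elements may be spread across a ball whose radius is comparable to the diameter of $F_i$. The remaining points are routine: verifying that the flood-fill decoder is genuinely computable (this uses only computability of the group multiplication), keeping the bookkeeping of the decision sequence synchronized between encoder and decoder, and the minor matter of selecting the Cayley-graph handedness matching the stated one-sided Følner condition.
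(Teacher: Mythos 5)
Your proof is correct and follows essentially the same route as the paper: a deterministic traversal of the connected set started at $1_G$, whose transcript costs nothing on the interior and only $o(\lvert F_i\rvert)$ bits on the Følner-small boundary, converted into a complexity bound by an entropy/binomial estimate. The only differences are cosmetic — the paper uses a depth-first search emitting one bit per visited vertex and invokes Lemma~\ref{lem: shannon bound} on the resulting almost-constant binary string, whereas you use a breadth-first flood fill with one decision per examined edge and bound $\log\binom{\lvert S\rvert\lvert F_i\rvert}{m_i}$ directly.
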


\begin{proof}
Algorithm \ref{dfs} (see Appendix \ref{app: algorithms}) takes a finite connected subset $T$ which contains $1_G$ and reversibly encodes it as a binary string consisting of $\lvert T \rvert$ $1$'s and $\lvert ST \setminus T \rvert$ $0$'s. That algorithm is based on the standard depth-first search algorithm for graphs (see \cite{CLRS}, Chapter 22.3).

I would like to note that the output string of Algorithm \ref{dfs} uniquelly determines the input set. Algorithm \ref{undfs} (Appendix \ref{app: algorithms}) decodes the set from the string. There $B(1_G,n)$ stands for the ball of radius $n$ in Cayley graph around the group identity.
If we apply the encoding to an element $F_i$ of the F\o lner sequence, we get the binary string of length $(1+ o_i(1))\lvert F_i\rvert$; this string contains a negligible amount of $0$'s; hence, by Lemma \ref{lem: shannon bound}, we have $C(F_i) = o(\lvert F_i\rvert)$.

\end{proof}

\section{Measure entropy; lower bound for complexity}\label{sec: measure lower bound}

Let $\alpha$ be a partition of a standard probability space $(X, \mu)$. For $x \in X$ denote $\alpha(x)$ the element of $\alpha$ that contains $x$. We will need the following generalization of the Shannon-McMillan-Breiman theorem due to Lindenstrauss \cite{L01}:

\begin{theor}
Consider an ergodic action of a countable amenable group $G$ on a standard probability space $(X, \mu)$. Let $\alpha$ be a partition of finite Shannon enropy. Let $(F_i)$ be a tempered F\o lner sequence. For $\mu$-a.e. $x \in X$ holds
\[
\lim_{i \to \infty} \frac{-\log \mu(\alpha^{F_i}(x))}{\lvert F_i\rvert} = \h_G(\alpha, X,\mu).
\] 
\end{theor}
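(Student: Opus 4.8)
The plan is to reduce the statement to a pointwise ergodic theorem applied to a single, explicitly constructed $L^1$ function, following the Shannon--McMillan--Breiman scheme adapted to amenable groups. Write $I_i(x) = -\log \mu(\alpha^{F_i}(x))$ for the information function of $\alpha^{F_i}$, so that the goal is $\tfrac{1}{\lvert F_i\rvert} I_i \to \h_G(\alpha,X,\mu)$ for $\mu$-a.e.\ $x$. Fixing for each $i$ an enumeration $F_i = \{g_1, \ldots, g_{\lvert F_i\rvert}\}$, the chain rule for conditional information gives
\[
I_i(x) = \sum_{j=1}^{\lvert F_i\rvert} I_{\alpha^{g_j} \mid \sigma(\alpha^{g_1}, \ldots, \alpha^{g_{j-1}})}(x),
\]
and measure-preservation lets one rewrite each summand as $I_{\alpha \mid \mathcal{G}_{i,j}}(g_j x)$, where $\mathcal{G}_{i,j}$ is the $\sigma$-algebra generated by the translated family $\{\alpha^{g_\ell g_j^{-1}} : \ell < j\}$. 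Thus $\tfrac{1}{\lvert F_i\rvert} I_i$ is a positional average over $g \in F_i$ of conditional informations of $\alpha$ given finitely many of its translates, evaluated along the orbit.

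The target limit is modelled on the case $G = \Z$: there the conditioning algebras $\mathcal{G}_{i,j}$ increase to a fixed ``past'' $\mathcal{P}$, so $\tfrac{1}{\lvert F_i\rvert}I_i \approx A_{F_i} f$ with $f = I_{\alpha \mid \mathcal{P}}$ and $\expect f = H(\alpha\mid\mathcal P) = h := \h_G(\alpha,X,\mu)$, where $A_{F_i} f(x) = \tfrac{1}{\lvert F_i\rvert}\sum_{g\in F_i} f(gx)$. I would first establish the mean statement $\tfrac{1}{\lvert F_i\rvert}I_i \to h$ in $L^1$: that $\tfrac{1}{\lvert F_i\rvert}H(\alpha^{F_i}) \to h$ is the Ornstein--Weiss subadditivity already invoked in the excerpt, and upgrading convergence of the means to $L^1$ convergence of the information functions themselves follows from uniform integrability of the family $\{\tfrac{1}{\lvert F_i\rvert}I_i\}$.

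The main work is upgrading $L^1$ convergence to almost-everywhere convergence, and this is exactly where temperedness of $\F$ is indispensable. I would prove (this is the heart of Lindenstrauss's argument) a pointwise ergodic theorem: the bound $\lvert \bigcup_{j<i} F_j^{-1} F_i\rvert \leq K\lvert F_i\rvert$ furnishes a Vitali-type covering lemma for translates of the sets $F_i$ with overlap bounded by $K$, which in turn yields a weak-$(1,1)$ maximal inequality for the averaging operators $A_{F_i}$. Density of bounded functions in $L^1$ then promotes this to a.e.\ convergence $A_{F_i} f \to \int f\,d\mu = h$ (using ergodicity). Applying this to $f$ handles the main term, and the same maximal inequality is used to dominate, and show the a.e.\ vanishing of, the average of the error terms $\sup_g \lvert I_{\alpha \mid \mathcal{G}_{i,g}} - f\rvert$ coming from the moving conditioning.

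The hard part is precisely this last domination. Unlike the case $G = \Z$, where the canonical ordering makes the conditioning algebras increase monotonically to a single past $\mathcal{P}$ so that Breiman's maximal-function trick applies directly, a general amenable group admits no such ordering: the families $\{g_\ell g_j^{-1} : \ell < j\}$ neither stabilize nor are comparable across different $g$, so even identifying the limit and controlling the errors uniformly is delicate. Making this control strong enough to pass from convergence in $L^1$ to convergence almost everywhere is the crux, and it is exactly what the covering/maximal-inequality apparatus built from the tempered condition is designed to overcome; this is the content of Lindenstrauss's \cite{L01}.
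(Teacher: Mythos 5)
First, note that the paper does not prove this statement at all: it is quoted as Lindenstrauss's generalization of the Shannon--McMillan--Breiman theorem and used as a black box, with the proof residing entirely in \cite{L01}. So there is no in-paper argument to compare yours against; the only question is whether your sketch would stand on its own as a proof.

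It would not, for two concrete reasons. First, the step ``upgrading convergence of the means to $L^1$ convergence of the information functions themselves follows from uniform integrability'' is false as stated: knowing $\expect\bigl[\tfrac{1}{\lvert F_i\rvert}I_i\bigr] \to h$ together with uniform integrability does not force $\tfrac{1}{\lvert F_i\rvert}I_i \to h$ in $L^1$ (a uniformly integrable sequence can have constant mean $h$ while staying at $L^1$-distance $1$ from the constant $h$). The mean Shannon--McMillan theorem for amenable groups is itself a nontrivial result (Kieffer, Ornstein--Weiss, Ollagnier) whose proof already requires the quasi-tiling machinery. Second, and more seriously, the part you correctly identify as ``the crux'' --- dominating the error coming from the moving, non-monotone conditioning $\sigma$-algebras $\mathcal{G}_{i,j}$, for which there is no single limiting past $\mathcal{P}$ --- is not supplied; you close by deferring it to ``the content of Lindenstrauss's \cite{L01}'', which is circular when the task is to prove that very theorem. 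This is not a cosmetic omission: the Breiman chain-rule scheme genuinely breaks down for general amenable $G$ at exactly this point, and Lindenstrauss's actual argument does not repair it along these lines but instead works directly with the atoms of $\alpha^{F_i}$, combining his pointwise ergodic theorem with Ornstein--Weiss-type $\varepsilon$-quasi-tilings of $F_i$ by smaller F\o lner sets on which the information is already controlled (the same covering lemma that reappears later in this paper for the topological upper bound). Your outline is a reasonable map of where the difficulties lie, but the decisive ideas are named rather than executed.
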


\begin{prop}\label{prop: measure lower bound}
Let $\mu$ be an ergodic invariant measure for the shift-action of a computable amenable group $G$ on $A^G$. If $\F=(F_i)$ is a modest tempered F\o lner sequence, then for $\mu$-a.e. $x\in A^G$ we have
\[
\lacf(x) \geq \h_G(A^G,\mu).
\]
\end{prop}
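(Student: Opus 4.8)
The plan is to combine the Shannon--McMillan--Breiman theorem of Lindenstrauss (stated just above) with the elementary counting bound for Kolmogorov complexity, glued together by the Borel--Cantelli lemma. Let $\alpha$ be the canonical alphabet generating partition of $A^G$. A direct computation with the shift action shows that the atom $\alpha^{F_i}(x)$ is exactly the cylinder $\lbrace y \in A^G : \pr_{F_i}(y) = \pr_{F_i}(x)\rbrace$, so $\mu(\alpha^{F_i}(x))$ is the $\mu$-measure of the pattern $\pr_{F_i}(x)$. Since $\alpha$ is generating, the Kolmogorov--Sinai theorem gives $\h_G(\alpha, A^G, \mu) = \h_G(A^G, \mu) =: \h$. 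Because $\F$ is tempered, the Shannon--McMillan--Breiman theorem applies: for $\mu$-a.e. $x$ we have $-\log\mu(\alpha^{F_i}(x))/\lvert F_i\rvert \to \h$.

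Fix $\varepsilon > 0$. First I would introduce the two events
\[
B_i = \lbrace x : C(\pr_{F_i}(x)) < (\h - \varepsilon)\lvert F_i\rvert\rbrace, \qquad D_i = \lbrace x : \mu(\alpha^{F_i}(x)) \le 2^{-(\h - \varepsilon/2)\lvert F_i\rvert}\rbrace.
\]
The counting bound (Proposition \ref{prop: compl properties}, item \ref{compl prop: combinatorics}) says that at most $2^{(\h-\varepsilon)\lvert F_i\rvert}$ elements of the constructible class $A^{\Subset G}$ have complexity below $(\h-\varepsilon)\lvert F_i\rvert$; in particular there are at most that many patterns on $F_i$ witnessing membership in $B_i$. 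On $B_i \cap D_i$ each such cylinder additionally has measure at most $2^{-(\h-\varepsilon/2)\lvert F_i\rvert}$, so summing over these cylinders gives
\[
\mu(B_i \cap D_i) \le 2^{(\h-\varepsilon)\lvert F_i\rvert}\cdot 2^{-(\h - \varepsilon/2)\lvert F_i\rvert} = 2^{-(\varepsilon/2)\lvert F_i\rvert}.
\]

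The crucial point---and the main obstacle---is precisely this intersection with $D_i$: the measure of $B_i$ alone cannot be controlled, since a pattern of small complexity may well carry large measure, so the counting bound by itself is useless. Intersecting with the Shannon--McMillan--Breiman--typical event $D_i$ is what converts the combinatorial counting bound into a genuine measure estimate; this is the heart of the argument. To finish, I would invoke Proposition \ref{prop: modest convergence}: since $G$ is infinite we have $\lvert F_i\rvert \to \infty$, so each finite set occurs only finitely often in $\F$ and we may assume the $F_i$ pairwise distinct without affecting the liminf; modesty then guarantees $\sum_i 2^{-(\varepsilon/2)\lvert F_i\rvert} < \infty$. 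By Borel--Cantelli, $\mu(\limsup_i (B_i \cap D_i)) = 0$. For $\mu$-a.e. $x$ the Shannon--McMillan--Breiman theorem places $x$ in $D_i$ for all large $i$, while the Borel--Cantelli conclusion places $x$ outside $B_i \cap D_i$ for all large $i$; together these force $x \notin B_i$ eventually, i.e. $C(\pr_{F_i}(x)) \ge (\h - \varepsilon)\lvert F_i\rvert$ for all large $i$, whence $\lacf(x) \ge \h - \varepsilon$. Intersecting the resulting full-measure sets over $\varepsilon = 1/n$, $n \in \N$, yields $\lacf(x) \ge \h_G(A^G,\mu)$ for $\mu$-a.e. $x$, as desired.
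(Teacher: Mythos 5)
Your proposal is correct and follows essentially the same route as the paper's proof: the Lindenstrauss Shannon--McMillan--Breiman theorem supplies the measure bound on cylinders, the counting bound on low-complexity patterns controls how many such cylinders there are, and Proposition \ref{prop: modest convergence} plus Borel--Cantelli finishes the argument. The only (cosmetic) difference is that the paper fixes an Egorov-type set $R$ of measure $>1-\delta$ on which the SMB convergence is uniform past some index $N$, whereas you work with the pointwise-eventual events $D_i$ directly; the two bookkeeping schemes are equivalent.
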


\begin{proof}
Denote $h=\h_G(A^G,\mu)$.
Without loss of generality we may assume that the F\o lner sequence does not have repeating elements.
Let $\alpha$ be the canonical alphabet generating partition. Let $\varepsilon,\delta>0$. By the theorem above, there is a subset $R \subset A^G$ with $\mu(R)> 1 - \delta$ and a number $N$ such that for each $x \in R$ and $i>N$ we have 
\[
\log \mu(\alpha^{F_i}(x)) \leq -(h - \varepsilon) \lvert F_i\rvert.
\]
In order to prove the proposition it is enough to show that for $\mu$-a.e. $x \in R$ we have
\[
\lacf(x) \geq h - 2\varepsilon.
\]
For each $i>N$, denote $S_i$ the subset of all $x \in R$ such that $C(\pr_{F_i}(x)) < (h - 2\varepsilon) \lvert F_i \rvert$, these sets are measurable since they are unions of cylinder sets. By the Borel-Cantelli lemma, it suffices to show that
\[
\sum_{i > N} \mu(S_i) < +\infty.
\]
We proceed by showing that this is the case.
For each $i>N$ there are at most $2^{(h-2\varepsilon)\lvert F_i\rvert}$ such $t \in A^{F_i}$ that $C(t) < (h-2\varepsilon)\lvert F_i\rvert$. This implies that 
\[
\mu(S_i) \leq 2^{(h-2\varepsilon)\lvert F_i\rvert} \cdot 2^{-(h-\varepsilon)\lvert F_i\rvert} = 2^{-\varepsilon \lvert F_i\rvert}.
\]
So the desired convergence holds due to Proposition \ref{prop: modest convergence}.

\end{proof}

\section{Measure entropy; upper bound for complexity}\label{sec: measure upper bound}

Let $\mu$ be an ergodic measure on $A^G$; we will show that (under some additional requirements) the upper asymptotic complexity of $\mu$-a.e. point is bounded from above by the Kolmogorov-Sinai entropy. Here is the general outline of the proof. We first prove a ``trivial'' bound: for an ergodic invariant measure on a shift-action, for almost every point the asymptocic complexity is bounded from above by the Shannon entropy of the cannonical alphabet generating partition.
We then show that the asympotic complexity is essentially monotone relative to factor maps. This is done first for cellular maps (which is immediate); then we extend this to any factor-map by means of an approximation argument. Theorem \ref{thm: small entropy generator} of Seward and Tucker-Drob implies that the desired bound on the complexity holds if the action is essentially free. We relieve the freeness assumption by taking a product with a Bernoulli action of small entropy. 

We will use the following ergodic theorem due to Lindenstrauss(\cite{L01}:

\begin{theor}\label{thm: lin ergodic}
Let $G$ be a countable amenable group acting ergodically on a standard probability space $(X,\mu)$. Let $f$ be an $L^1$ function on $(X,\mu)$. If $(F_i)$ is a tempered F\o lner sequence, then
\[
\lim_{i \to \infty}\frac1{\lvert F_i \rvert}\sum_{g \in F_i} f(g x) = \expect_{(X,\mu)} f,
\]
for $\mu$-a.e. $x \in X$.
\end{theor}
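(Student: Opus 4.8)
The plan is to follow the standard route for pointwise ergodic theorems: reduce almost-everywhere convergence to a weak-type maximal inequality via the Banach principle, and obtain that maximal inequality from a covering lemma tailored to tempered F\o lner sequences. First I would introduce the averaging operators $A_i f(x) = \frac{1}{\lvert F_i\rvert}\sum_{g \in F_i} f(gx)$ and the maximal function $Mf(x) = \sup_i \lvert A_i f(x)\rvert$. The analytic heart of the argument is the weak $(1,1)$ bound: there should be a constant $C = C(K)$, depending only on the temperedness constant $K$, such that
\[
\mu\left(\left\{x : Mf(x) > \lambda\right\}\right) \leq \frac{C\lVert f\rVert_1}{\lambda}
\]
for every $f \in L^1(X,\mu)$ and every $\lambda > 0$.

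Second, I would deduce this maximal inequality from a combinatorial covering lemma for the group. The statement to aim for is: given a finite collection of translates of F\o lner sets, one can select a subcollection of bounded multiplicity (bounded overlap, with the bound depending only on $K$) whose union still captures a definite proportion of the union of the whole collection. The temperedness inequality $\lvert\bigcup_{j<i} F_j^{-1}F_i\rvert \leq K\lvert F_i\rvert$ is exactly what controls how much the previously selected tiles can obstruct a candidate tile, and it is what makes a greedy, randomized selection of tiles succeed. Transferring this purely combinatorial statement to the dynamical setting by the usual argument of tiling individual orbits then yields the maximal inequality above.

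Third, armed with the maximal inequality, I would invoke the Banach principle: the set of $f \in L^1$ for which $A_i f$ converges almost everywhere is closed in $L^1$, so it suffices to verify convergence on a dense subclass. For that subclass I would take the $G$-invariant functions, for which $A_i f = f$ and convergence is trivial, together with the coboundaries $u - g \cdot u$ with $u \in L^\infty$ and $g \in G$. For a coboundary the F\o lner property forces $\lVert A_i(u - g \cdot u)\rVert_1 \to 0$ because the averaged terms telescope up to the negligible boundary $F_i g \,\triangle\, F_i$; combined with the maximal inequality this upgrades the norm convergence to almost-everywhere convergence to zero. By the mean ergodic theorem the spans of these two families are dense in $L^1$, which closes the Banach-principle argument.

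Finally, ergodicity identifies the limit: the orthogonal projection onto the invariant subspace sends $f$ to the constant $\expect_{(X,\mu)} f$, so the almost-everywhere limit must equal $\expect_{(X,\mu)} f$. The main obstacle is unquestionably the covering lemma of the second step; this is the genuinely hard and original ingredient of Lindenstrauss's work, requiring the probabilistic tile-selection argument, whereas the reduction via the Banach principle and the identification of the limit through the mean ergodic theorem are routine once the maximal inequality is in hand.
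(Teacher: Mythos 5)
The paper offers no proof of this statement: it is Lindenstrauss's pointwise ergodic theorem, quoted directly from \cite{L01} and used as a black box (its Shannon--McMillan--Breiman counterpart is likewise imported without proof). So there is no internal argument to compare yours against; the only meaningful benchmark is the proof in \cite{L01} itself.

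Measured against that, your outline reproduces the standard architecture faithfully --- a weak $(1,1)$ maximal inequality derived from a covering lemma whose overlap control comes from the temperedness constant, the Banach principle, a dense subclass consisting of invariant functions and bounded coboundaries, and identification of the limit via ergodicity. But it is an outline, not a proof: the entire analytic content, namely the probabilistic tile-selection covering lemma and its transference to the maximal inequality, is described as a goal rather than established, as you yourself acknowledge in the last paragraph. Two smaller points. First, for a coboundary $h = u - u\circ g$ with $u \in L^\infty$ the telescoping already gives the uniform bound $\lvert A_i h(x)\rvert \leq \frac{\lvert gF_i \,\Delta\, F_i\rvert}{\lvert F_i\rvert}\,\lVert u\rVert_\infty \to 0$, so convergence on the dense class holds everywhere and needs no maximal inequality; by contrast, ``norm convergence of $A_i h$ plus the maximal inequality'' does not by itself yield almost-everywhere convergence for a single fixed function, so the step as you phrase it is not a valid deduction --- the maximal inequality is needed only to pass from the dense class to a general $f \in L^1$. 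Second, the mean ergodic theorem gives density of the invariant functions together with the span of coboundaries in $L^2$, after which one uses density of $L^2$ in $L^1$; this is worth stating explicitly since the maximal inequality and the conclusion are $L^1$ statements.
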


Our first goal is to establish a ``trivial'' bound in Lemma \ref{lem: simple bound for complexity via entropy}.

Let $A$ be a finite alphabet. For a word $w \in A^n$ we denote $p(w)$ the probability vector for occurence rates of letters from $A$ in $w$.

The following is Lemma 146 from the book \cite{SUV}.

\begin{lem}\label{lem: shannon bound}
For every $w \in A^*$ holds
\[
C(w) \leq \lvert w\rvert(H(p(w)) + o_{\lvert w\rvert}(1)).
\]
\end{lem}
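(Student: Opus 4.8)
The plan is to prove the bound
\[
C(w) \leq \lvert w\rvert (H(p(w)) + o_{\lvert w\rvert}(1))
\]
by exhibiting a short program that reconstructs $w$ from an index into the set of words with the same letter-frequency profile as $w$. The key observation is that $w$ is determined by two pieces of data: its multiset of letter counts, and its position within the lexicographic enumeration of all words having exactly that multiset of counts. The first piece is cheap to describe; the second, by a counting argument, can be encoded in roughly $\lvert w\rvert H(p(w))$ bits. I would assemble these into a description whose total length matches the claimed bound.

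First I would set up the combinatorics. Write $A = \{a_1,\ldots,a_k\}$ and let $n = \lvert w\rvert$, and let $n_j$ be the number of occurrences of $a_j$ in $w$, so that $p(w) = (n_1/n,\ldots,n_k/n)$ and $\sum_j n_j = n$. The number of words over $A$ of length $n$ with exactly these letter counts is the multinomial coefficient
\[
\binom{n}{n_1,\ldots,n_k} = \frac{n!}{n_1!\cdots n_k!}.
\]
The standard estimate (via Stirling's formula, or equivalently the entropy bound on multinomials) gives
\[
\log \binom{n}{n_1,\ldots,n_k} \leq n H(p(w)) + o(n),
\]
where the $o(n)$ term absorbs the lower-order $O(\log n)$ contributions from Stirling and is uniform once $k = \lvert A\rvert$ is fixed.

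Next I would describe the program realizing the encoding. Given $n$, the counts $(n_1,\ldots,n_k)$, and an index $m$ into the lexicographically ordered list of all words with those counts, there is a computable function that outputs the $m$-th such word; this is a routine unranking procedure. Hence $C(w)$ is bounded, up to an additive constant, by the length of a self-delimiting encoding of the tuple $(n, n_1,\ldots,n_{k-1}, m)$. The counts and $n$ require only $O(\log n)$ bits (using, say, doubled-digit prefix coding as in the proof of Proposition \ref{prop: compl properties}, Property \ref{compl prop: tuple}), while the index $m$ satisfies $m < \binom{n}{n_1,\ldots,n_k}$ and so is encodable in $\log\binom{n}{n_1,\ldots,n_k} + O(1) \leq nH(p(w)) + o(n)$ bits. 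Summing, $C(w) \leq nH(p(w)) + o(n) = \lvert w\rvert(H(p(w)) + o_{\lvert w\rvert}(1))$, using the tuple-complexity and monotonicity properties already established.

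The main obstacle, such as it is, lies in making the $o(n)$ term genuinely uniform over all frequency vectors rather than merely pointwise: the Stirling estimate must be controlled simultaneously for all choices of $(n_1,\ldots,n_k)$ summing to $n$, including degenerate cases where some $n_j$ vanish (so that $H(p(w))$ itself is small and one must check the bound does not become vacuous). Since $\lvert A\rvert$ is a fixed finite constant, the number of count-parameters is bounded and the error terms from Stirling are uniformly $O(\log n)$, which is absorbed into $o(n)$; the degenerate cases are handled by the convention $0\log 0 = 0$ and cause no difficulty. This uniformity is exactly what licenses writing the correction as a single $o_{\lvert w\rvert}(1)$ factor depending only on $\lvert w\rvert$, and is the one point where a little care is required.
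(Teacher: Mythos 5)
Your proof is correct. Note that the paper does not prove this lemma at all: it is quoted as Lemma 146 from the Shen--Uspensky--Vereshchagin book \cite{SUV}, so there is no internal proof to compare against. Your argument --- describe $w$ by its type (letter counts) plus its lexicographic rank within the type class, bound the class size by the multinomial coefficient, and bound $\log\binom{n}{n_1,\ldots,n_k}$ by $nH(p(w))+o(n)$ --- is the standard proof of exactly this fact and is the one underlying the cited result. The details check out: the counts cost $O(\log n)$ bits since $\lvert A\rvert$ is fixed, the rank costs at most $\log\binom{n}{n_1,\ldots,n_k}+O(1)$ bits, unranking is computable, and your uniformity concern is handled as you say (indeed one can avoid Stirling entirely by using the exact combinatorial bound $\binom{n}{n_1,\ldots,n_k}\leq 2^{nH(p(w))}$, which makes the error term uniformly $O(\log n)$ with no case analysis).
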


\begin{lem}\label{lem: simple bound for complexity via entropy}
Consider the shift-action of a computable amenable group $G$ on $A^G$ for a finite set $A$. Let $\mu$ be an ergodic invariant measure. Let $\alpha$ be the canonical alphabet generating partition. If $\F =(F_i)$ is a tempered F\o lner sequence, then
\[
\limsup_{i \to \infty} \frac{C(\cont( \pr_{F_i} (x)))}{\lvert F_i\rvert} \leq H(\alpha),
\]
for $\mu$-a.e. $x \in X$.
If in addition $\F$ is modest, then 
\[
\uacf(x)\leq H(\alpha),
\]
for $\mu$-a.e. $x \in X$.
\end{lem}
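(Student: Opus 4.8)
The plan is to reduce the statement to the combinatorial Shannon bound of Lemma~\ref{lem: shannon bound} by recognizing the empirical letter-frequency vector of the word $\cont(\pr_{F_i}(x))$ as a family of ergodic averages, to which the Lindenstrauss pointwise ergodic theorem (Theorem~\ref{thm: lin ergodic}) applies; continuity of the Shannon entropy then transfers the convergence of frequencies to convergence of their entropies.

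First I would unwind the definitions. For $a \in A$ set $[a] = \lbrace y \in A^G : y(1_G) = a\rbrace$, so that the probability vector whose entropy is $H(\alpha)$ is exactly $(\mu([a]))_{a \in A}$. The word $\cont(\pr_{F_i}(x))$ has length $\lvert F_i\rvert$ and its letters are the values $x(g)$, $g \in F_i$. Since the shift action satisfies $(gx)(1_G) = x(g)$, we have $x(g) = a$ if and only if $gx \in [a]$; hence the frequency of the letter $a$ in $\cont(\pr_{F_i}(x))$ equals
\[
\frac{1}{\lvert F_i\rvert}\sum_{g \in F_i} \mathbf{1}_{[a]}(gx).
\]
Each $\mathbf{1}_{[a]}$ is bounded, so it lies in $L^1(\mu)$, and Theorem~\ref{thm: lin ergodic} shows that this average converges for $\mu$-a.e.\ $x$ to $\expect_{(A^G,\mu)} \mathbf{1}_{[a]} = \mu([a])$. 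Because $A$ is finite, intersecting the finitely many conull sets (one per $a \in A$) produces a single conull set on which the whole frequency vector $p(\cont(\pr_{F_i}(x)))$ tends to $(\mu([a]))_{a \in A}$ as $i \to \infty$.

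On that conull set I would then combine this with Lemma~\ref{lem: shannon bound}. Since $\lvert F_i\rvert \to \infty$ (automatic for a F\o lner sequence of an infinite group), the error $o_{\lvert F_i\rvert}(1)$ vanishes; together with the continuity of $H$ on the simplex of probability vectors over $A$ this yields
\[
\frac{C(\cont(\pr_{F_i}(x)))}{\lvert F_i\rvert} \leq H\bigl(p(\cont(\pr_{F_i}(x)))\bigr) + o_{\lvert F_i\rvert}(1),
\]
whose right-hand side converges to $H(\alpha)$. Taking the $\limsup$ gives the first displayed inequality. For the second assertion, if $\F$ is in addition modest, Proposition~\ref{prop: asymtotic complexity equivalent def} lets me identify $\limsup_i C(\cont(\pr_{F_i}(x)))/\lvert F_i\rvert$ with $\uacf(x)$, so $\uacf(x) \leq H(\alpha)$ for $\mu$-a.e.\ $x$.

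The argument is essentially bookkeeping once the ingredients are aligned, and I do not expect a genuine obstacle. The one point that needs care is the order of the almost-everywhere quantifiers: convergence of the \emph{entire} frequency vector on a fixed full-measure set is available only because $A$ is finite, so that a finite intersection of the Lindenstrauss exceptional null sets suffices. The substantive inputs---the Shannon bound and the ergodic theorem for tempered F\o lner sequences---are already in hand.
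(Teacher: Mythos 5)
Your proposal is correct and follows essentially the same route as the paper: identify the letter frequencies in $\cont(\pr_{F_i}(x))$ as ergodic averages of the indicators of the cylinder sets $W_a$, apply Theorem~\ref{thm: lin ergodic} to each, conclude via Lemma~\ref{lem: shannon bound}, and invoke Proposition~\ref{prop: asymtotic complexity equivalent def} for the modest case. Your remark about intersecting the finitely many exceptional null sets is a point the paper leaves implicit, but it is the same argument.
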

\begin{proof}
I claim that for $\mu$-a.e. $x \in A^G$, we have that $p(\cont(\pr_{F_i}(x)))$ tends to the probability vector of $\alpha$. Indeed, denote $W_a$ for $a \in A$ the set of all $x \in A^G$ with $x(1_G) = a$. The occurence rate $p(\pr_{F_i}(x))(a)$ of $a$ in $\pr_{F_i}$ is equal to 
\[
\frac{1}{\lvert F_i \rvert} \sum_{g \in F_i} I_{W_a}(gx),
\]
where $I_{W_a}$ is the indicator function of set $W_a$. The claim follows from the Lindenstrauss ergodic theorem applied to function $I_{W_a}$. 

The first assertion is now a consequence of the previous lemma. The second one follows easily now from Proposition \ref{prop: asymtotic complexity equivalent def}.
\end{proof}

Next, we would like to prove that asymptotic complexity does not increase under factor-maps. This is easy to establish for cellular maps. Then we observe that two colorings from $A^G$ which are mostly the same (in the sense of Hamming distance), have close asymptotic complexities. We then note that any equivariant map can be approximated by a cellular map. The desired monotonicity follows.

Let $w'$ and $w''$ be two words in $A^*$ of the same length. The {\em Hamming distance} between $w'$ and $w''$ is defined as 
\[
\frac{\lvert i: w'(i) \neq w''(i) \rvert}{\lvert w'\rvert},
\] 
and denoted by $\HD(w', w'')$.

The lemma below shows that two words have close complexity rates, given that these words are close in the Hamming metric.

\begin{lem}\label{lem: words hamming and complexity}
For two words $w'$,$w''$ in $A^*$ of the same length $n$ the following holds:
\[
\lvert C(w') - C(w'')\rvert \leq n \left(H(\HD(w',w''), 1-\HD(w',w'')) +  \HD(w',w'')\log \lvert A \rvert + o_n(1)\right).
\]
\end{lem}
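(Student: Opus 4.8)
The plan is to bound $C(w'')$ in terms of $C(w')$ by exhibiting a short description of $w''$ relative to $w'$, and then to use the symmetry of the Hamming distance to obtain the two-sided estimate. Write $d = \HD(w',w'')$, so that $w'$ and $w''$ differ in exactly $dn$ of their $n$ coordinates.

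First I would encode the location of the disagreements. Let $u \in \{0,1\}^n$ be the \emph{difference pattern}, with $u(i) = 1$ precisely when $w'(i) \neq w''(i)$; this is a binary word of length $n$ whose frequency vector is $(d, 1-d)$. By the Shannon bound, Lemma \ref{lem: shannon bound}, we have $C(u) \le n\bigl(H(d, 1-d) + o_n(1)\bigr)$, where the $o_n(1)$ is uniform over all binary words of length $n$. Next I would encode the disagreeing values: let $v \in A^{dn}$ list, in increasing order of position, the letters $w''(i)$ at those $i$ with $u(i) = 1$. By the trivial bound (Proposition \ref{prop: compl properties}, Property \ref{compl prop: trivial bound}) we have $C(v) \le dn \log \lvert A\rvert + O(1)$.

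The key observation is that $w''$ is a computable function of the triple $(w', u, v)$: wherever $u$ reads $1$ one substitutes the next letter of $v$, and wherever $u$ reads $0$ one keeps the corresponding letter of $w'$ (the common length $n$ and the number $dn$ of substitutions are recovered from $u$ itself). Hence, combining the monotonicity of complexity under computable maps (Property \ref{compl prop: monot}) with the tuple bound (Property \ref{compl prop: tuple}),
\[
C(w'') \le C(w') + C(u) + C(v) + 2\log C(w') + 2\log C(u) + O(1).
\]
Since $C(w') \le n \log\lvert A\rvert + O(1)$ and likewise $C(u), C(v) = O(n)$, all the logarithmic overhead terms are $O(\log n)$, hence of the form $n \cdot o_n(1)$. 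Substituting the two bounds above yields
\[
C(w'') - C(w') \le n\bigl(H(d, 1-d) + d \log\lvert A\rvert + o_n(1)\bigr).
\]
Because $\HD(w', w'') = \HD(w'', w')$, the identical argument with the roles of $w'$ and $w''$ interchanged bounds $C(w') - C(w'')$ by the same quantity, and taking the maximum gives the claimed inequality for $\lvert C(w') - C(w'')\rvert$.

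The only point requiring care is the bookkeeping of the lower-order terms: one must check that the tuple overhead and the error term in the Shannon bound, both absorbed into $n \cdot o_n(1)$, are genuinely uniform in the two words of length $n$, i.e. that the $o_n(1)$ depends on $n$ alone and not on $d$. This holds because the Shannon bound error is uniform over binary strings of a fixed length and all the remaining overhead is $O(\log n)$ regardless of $w'$ and $w''$. I do not anticipate a genuine obstacle beyond this routine verification.
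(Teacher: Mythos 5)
Your proposal is correct and follows essentially the same route as the paper: encode the difference pattern $u$ (bounded via the Shannon frequency bound) and the list $v$ of substituted letters (bounded trivially), recover $w''$ computably from the triple $(w',u,v)$, apply the tuple and monotonicity properties of Kolmogorov complexity, and conclude by the symmetry of the Hamming distance. Your additional remark on the uniformity of the $o_n(1)$ term is a reasonable point of care that the paper leaves implicit.
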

\begin{proof}
In order to recover $w''$ from $w'$ it is enough to provide the word $u$ from $\lbrace 0,1 \rbrace^n$ that will encode the places where $w'$ and $w''$ differ, and the word $v$ from $A^{n \HD(w', w'')}$ that will encode the letters to substitude: it will have $1$'s in the places where $w'$ and $w''$ differ, and $0$' elsewhere.
The first word contains exactly $n \cdot \HD(w',w'')$ of $1$'s. So its complexity is bounded by 
\[
n \cdot H(\HD(w',w''), 1-\HD(w',w'')) +o(n), 
\]
by Lemma \ref{lem: simple bound for complexity via entropy}.
The second has complexity bounded by 
\[
n \HD(w', w'') \log \lvert A\rvert + O(1), 
\]
by Statement \ref{compl prop: trivial bound} from  Propostition \ref{prop: compl is factor-monotone}.

Let $q$ be a computable function that takes word $w'$, the difference-encoding string $u$, and the string of substitute letters $v$, and outputs $w''$.
We can see that 
\begin{multline*}
C(w'') = C(q(w',u,v)) \leq C(w',u,v) + O(1) \\ \leq C(w') + 2\log C(w') + C(u) + 2\log C(u) + C(v) \\= C(w') + C(u) + C(v) + o(n) \\\leq C(w') + n \cdot H(\HD(w',w''), 1-\HD(w',w'')) + n \cdot \HD(w',w'') + o_n(n). 
\end{multline*}  
We used monotonicity of the Kolmogorov complexity under computable maps and then the bound for the Kolmogorov complexity of a tuple, see Proposition \ref{prop: compl properties}.
We can obtain a similar upper bound for $C(w')$ in terms of $C(w'')$. 
\end{proof}

For two elements $t_1, t_2 \in A^{\Subset G}$ with $\supp(t_1) = \supp(t_2)$, we define the Hamming distance between them by 
\[
\frac{\vert \lbrace g \in supp(t_1),\; t_1(g)  \neq t_2(g)\rbrace\rvert}{\lvert\supp(t_1)\rvert},
\]
and denote it as $\HD(t_1, t_2)$. 

Let $\F =(F_i)$ be a F\o lner sequence. For two elements $x_1, x_2 \in A^G$, we define the {\em asymptotic Hamming distance} between them relative to $\F$ by the formula
\[
\limsup_{i \to \infty} \HD(\pr_{F_i}(x_1), \pr_{F_i}(x_2)),
\]
and denote it as $\dhf(x_1, x_2)$.

\begin{lem}
Let $A$ and $B$ be finite sets. Suppose $\pi'$ and $\pi''$ are two measurable equivariant maps from $B^G$ to $A^G$. Suppose $\mu$ is an ergodic invariant measure on $B^G$. Let $W$ be the set of all such $y \in B^G$ that $(\pi'(y))(1_G) \neq (\pi''(y))(1_G)$, where $1_G$ stands for the identity element in group $G$. Let $\F = (F_i)$ be a tempered F\o lner sequence. The for $\mu$-a.e. $y \in B^G$ holds
\[
\dhf(\pi'(y),\pi''(y)) = \mu(W).
\]

\end{lem}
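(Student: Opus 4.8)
The plan is to recognize the quantity $\HD(\pr_{F_i}(\pi'(y)),\pr_{F_i}(\pi''(y)))$ as an ergodic average and apply Lindenstrauss' ergodic theorem (Theorem~\ref{thm: lin ergodic}). The key observation is that the two equivariant maps $\pi'$ and $\pi''$ differ at a given coordinate $g$ in a way that is governed by translating $y$ by $g$ and testing membership in $W$.

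First I would express the Hamming distance as a normalized count over $F_i$. For each $g \in F_i$, the colorings $\pi'(y)$ and $\pi''(y)$ differ at coordinate $g$ precisely when $(\pi'(y))(g) \neq (\pi''(y))(g)$. Using equivariance of both maps, I would rewrite $(\pi'(y))(g)$ in terms of the generator maps evaluated at $gy$. Concretely, if $\bar{\pi}'(y) = (\pi'(y))(1_G)$ and similarly for $\pi''$, then $(\pi'(y))(g) = \bar{\pi}'(gy)$ and $(\pi''(y))(g) = \bar{\pi}''(gy)$, so the condition $(\pi'(y))(g) \neq (\pi''(y))(g)$ is exactly the condition $gy \in W$. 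Therefore
\[
\HD(\pr_{F_i}(\pi'(y)),\pr_{F_i}(\pi''(y))) = \frac{1}{\lvert F_i \rvert}\sum_{g \in F_i} I_W(gy),
\]
where $I_W$ is the indicator of $W$.

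Next I would apply Theorem~\ref{thm: lin ergodic} to the $L^1$ function $f = I_W$ on $(B^G,\mu)$. Since $\F$ is a tempered F\o lner sequence and $\mu$ is ergodic, the ergodic averages converge for $\mu$-a.e.\ $y$ to $\expect_{(B^G,\mu)} I_W = \mu(W)$. This gives that the limit (not merely the $\limsup$) of $\HD(\pr_{F_i}(\pi'(y)),\pr_{F_i}(\pi''(y)))$ equals $\mu(W)$ for $\mu$-a.e.\ $y$, which in particular identifies the $\limsup$ defining $\dhf$ with $\mu(W)$, as claimed.

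I do not anticipate a serious obstacle here; the argument is essentially the same ergodic-averaging computation already carried out in the proof of Lemma~\ref{lem: simple bound for complexity via entropy}. The only point requiring mild care is the equivariance bookkeeping: one must confirm that the generator maps $\bar{\pi}'$, $\bar{\pi}''$ satisfy $(\pi'(y))(g) = \bar{\pi}'(gy)$ with the correct convention for the shift-action $(gx)(h) = x(hg)$, so that the event $\{(\pi'(y))(g)\neq(\pi''(y))(g)\}$ coincides with $\{gy \in W\}$ rather than some translate of $W$ by $g^{-1}$. Once this identification is verified, the rest is a direct invocation of the ergodic theorem.
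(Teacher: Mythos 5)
Your proposal is correct and follows essentially the same route as the paper: identify $(\pi'(y))(g)\neq(\pi''(y))(g)$ with the event $gy\in W$ via equivariance and the shift convention $(gx)(h)=x(hg)$, then apply Lindenstrauss' ergodic theorem to the indicator of $W$. The equivariance bookkeeping you flag is exactly the one-line computation the paper performs, so there is no gap.
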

\begin{proof}
It is easy to note that $(\pi'(y))(g) \neq (\pi''(y))(g)$ is equivalent to $(g\pi'(y))(1_G) \neq (g\pi''(y))(1_G)$, which in turn is equivalent to $(\pi'(gy))(1_G) \neq (\pi''(gy))(1_G)$. The latter is equivalent to $gy \in W$.
Now the statement follows from Lindenstrauss' ergodic theorem (Theorem \ref{thm: lin ergodic}) applied to the indicator function of set $W$.
\end{proof}

\begin{lem}\label{lem: map approximation and complexity}
In the setting of the previous lemma assume $\F= (F_i)$ is a modest F\o lner sequence. For $\mu$-a.e. $y \in B^G$ we have
\[
\lvert \uacf(\pi'(y)) - \uacf(\pi''(y)) \rvert \leq H(\mu(W),1 - \mu(W)) + \mu(W)\log \lvert A\rvert.
\]
\end{lem}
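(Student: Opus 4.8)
The plan is to combine the two preceding lemmas. The earlier lemma shows that the asymptotic Hamming distance $\dhf(\pi'(y),\pi''(y))$ equals $\mu(W)$ for $\mu$-a.e. $y \in B^G$, so it suffices to bound the difference of upper asymptotic complexities in terms of this asymptotic Hamming distance and then substitute. The key tool is Lemma \ref{lem: words hamming and complexity}, which controls the difference of complexities of two equal-length words by their Hamming distance. Since $\F$ is modest, Proposition \ref{prop: asymtotic complexity equivalent def} lets me pass freely between $\uacf(x)$ and the limsup of $C(\cont(\pr_{F_i}(x)))/\lvert F_i\rvert$; this is what makes Lemma \ref{lem: words hamming and complexity} applicable, as that lemma speaks about words in $A^*$ rather than partial maps.

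First I would fix a point $y$ in the full-measure set where $\dhf(\pi'(y),\pi''(y)) = \mu(W)$, and write $d = \mu(W)$. For each $i$, apply Lemma \ref{lem: words hamming and complexity} to the two words $w'_i = \cont(\pr_{F_i}(\pi'(y)))$ and $w''_i = \cont(\pr_{F_i}(\pi''(y)))$, which have common length $\lvert F_i\rvert$ and Hamming distance $\delta_i := \HD(\pr_{F_i}(\pi'(y)),\pr_{F_i}(\pi''(y)))$. This yields
\[
\bigl\lvert C(w'_i) - C(w''_i)\bigr\rvert \leq \lvert F_i\rvert\Bigl(H(\delta_i, 1-\delta_i) + \delta_i \log\lvert A\rvert + o_{\lvert F_i\rvert}(1)\Bigr).
\]
Dividing by $\lvert F_i\rvert$ and taking $\limsup$ as $i \to \infty$, I use that $\limsup_i \delta_i = d$ together with the continuity and monotonicity of the function $t \mapsto H(t,1-t) + t\log\lvert A\rvert$ on $[0, \tfrac12]$ (and that $d = \mu(W)$ may be assumed at most $\tfrac12$, else the bound is vacuous) to conclude that the right-hand side tends to $H(d,1-d) + d\log\lvert A\rvert$. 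The left-hand side controls the difference of the two limsups via the elementary inequality $\lvert \limsup a_i - \limsup b_i\rvert \leq \limsup\lvert a_i - b_i\rvert$, giving the claimed bound once Proposition \ref{prop: asymtotic complexity equivalent def} is invoked to replace $C(w'_i)/\lvert F_i\rvert$ and $C(w''_i)/\lvert F_i\rvert$ by the respective upper asymptotic complexities.

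The main subtlety is the interchange of limits in the step above: Lemma \ref{lem: words hamming and complexity} gives a bound whose right-hand side depends on $\delta_i$, whereas I want a bound in terms of $d = \limsup_i \delta_i$. The function $\phi(t) = H(t,1-t) + t\log\lvert A\rvert$ is continuous but not monotone on all of $[0,1]$, so I must argue that for any $\eta > 0$ one has $\delta_i \leq d + \eta$ for all large $i$, hence $\phi(\delta_i) \leq \sup_{t \in [0, d+\eta]}\phi(t)$ eventually; letting $\eta \to 0$ and using continuity of $\phi$ and the assumption $d \le \tfrac12$ (where $\phi$ is nondecreasing) pins the limsup down to $\phi(d)$. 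This is the only place requiring care; the rest is bookkeeping with the established monotonicity and tuple-complexity properties from Proposition \ref{prop: compl properties}.
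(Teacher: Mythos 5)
Your proposal is correct and follows essentially the same route as the paper's proof: use modesty and Proposition \ref{prop: asymtotic complexity equivalent def} to pass to the words $\cont(\pr_{F_i}(\cdot))$, apply Lemma \ref{lem: words hamming and complexity} termwise, divide by $\lvert F_i\rvert$, and take the upper limit using the preceding lemma on the asymptotic Hamming distance. The limit-interchange subtlety you worry about is actually moot, since the preceding lemma is proved via the Lindenstrauss ergodic theorem, which shows that $\delta_i$ genuinely converges to $\mu(W)$ (not merely has it as a limsup), so continuity of $t \mapsto H(t,1-t)+t\log\lvert A\rvert$ suffices; and in your fallback argument the threshold is slightly off --- that function is increasing on $[0,\lvert A\rvert/(\lvert A\rvert+1)]$, not just $[0,\tfrac12]$, and it is only beyond that larger point that the bound exceeds the trivial $\log\lvert A\rvert$ and becomes vacuous, but this does not affect the validity of your argument once the threshold is corrected.
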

\begin{proof}
Since the F\o lner sequence is modest, by Proposition \ref{prop: asymtotic complexity equivalent def}, it is enough to prove that 
\begin{multline*}
\limsup_{i \to \infty} \frac{\big\lvert C(\cont(\pr_{F_i}(\pi'(y))))- C(\cont(\pr_{F_i}(\pi''(y))))\big\rvert}{\lvert F_i\rvert} \\ \leq  H(\mu(W),1 - \mu(W)) + \mu(W)\log \lvert A\rvert.
\end{multline*}

By Lemma \ref{lem: words hamming and complexity}, we have

\begin{multline*}
\big\lvert C(\cont(\pr_{F_i}(\pi'(y)))) - C(\cont(\pr_{F_i}(\pi''(y)))) \big\rvert \\ \leq \lvert F_i \rvert \cdot  H\Big(\HD\big(\pr_{F_i}(\pi'(y)), \pr_{F_i}(\pi''(y))\big),  1 - \HD\big(\pr_{F_i}(\pi'(y)), \pr_{F_i}(\pi''(y))\big)\Big) \\+ \HD\big(\pr_{F_i}(\pi'(y)), \pr_{F_i}(\pi''(y))\big) \log \lvert A\rvert + o(\lvert F_i\rvert).
\end{multline*}
Next, we divide by $\lvert F_i \rvert$ and take the upper limit. Using the previous lemma, we get the desired.

\end{proof}

\begin{lem}\label{lem: complexity monotone cellular}
If $\pi: B^G \to A^G$ is a cellular map, then for every $y \in B^G$ and every F\o lner sequence $\F = (F_i)$ holds
\[
\uacf(\pi(y)) \leq \uacf(y).
\] 
\end{lem}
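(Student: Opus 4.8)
The plan is to exploit the finite ``memory'' of a cellular map: the projection $\pr_{F_i}(\pi(y))$ can be reconstructed by one computable procedure from $\pr_{F_i}(y)$ together with a vanishingly small amount of extra data, after which the complexity bound drops out directly. Note that modesty or temperedness play no role here, consistently with the statement (``every F\o lner sequence'').

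First I would record the local form of the map. Writing $M$ for the memory set and $\tau\colon B^M \to A$ for the local rule, so that $(\pi(y))(1_G) = \tau(\pr_M(y))$, equivariance gives $(\pi(y))(g) = (g\pi(y))(1_G) = (\pi(gy))(1_G) = \tau\big(m \mapsto y(mg)\big)$ for every $g \in G$. Hence the values of $\pi(y)$ on $F_i$ are determined by the values of $y$ on $MF_i = \bigcup_{m \in M} mF_i$. Since $M$ is finite and $\F$ is a F\o lner sequence, $\lvert MF_i \setminus F_i\rvert \le \sum_{m \in M}\lvert mF_i \setminus F_i\rvert = o(\lvert F_i\rvert)$, so $MF_i$ exceeds $F_i$ only by a negligible set.

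Next I would build a single computable function $q$, independent of $i$, with $M$ and $\tau$ hard-coded, which takes the pair $\big(\cont(\pr_{MF_i \setminus F_i}(y)),\, \pr_{F_i}(y)\big)$ and returns $\pr_{F_i}(\pi(y))$. The key is that $\pr_{F_i}(y)$ already carries its domain $F_i$, from which $q$ computes $MF_i$, the set $MF_i \setminus F_i$, and its increasing enumeration; assigning to this enumeration the letters of the content string reconstructs $\pr_{MF_i \setminus F_i}(y)$, and unioning with $\pr_{F_i}(y)$ recovers $\pr_{MF_i}(y)$; finally $q$ evaluates $\tau$ at each $g \in F_i$ via $m \mapsto y(mg)$, all arguments lying in $MF_i$. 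This is computable because the group operation and the bookkeeping on the relevant constructible classes are computable. Crucially, the set $MF_i \setminus F_i$ need not be encoded separately: only its $o(\lvert F_i\rvert)$ letters are transmitted.

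Applying monotonicity under computable maps and the bound for the complexity of tuples (Proposition \ref{prop: compl properties}, items \ref{compl prop: monot} and \ref{compl prop: tuple}), and placing the short content string first so that the logarithmic term attaches to it, I obtain
\[
C(\pr_{F_i}(\pi(y))) \le C(\pr_{F_i}(y)) + C\big(\cont(\pr_{MF_i\setminus F_i}(y))\big) + 2\log C\big(\cont(\pr_{MF_i\setminus F_i}(y))\big) + O(1).
\]
By the trivial bound (item \ref{compl prop: trivial bound}) the content-string complexity is at most $\lvert MF_i\setminus F_i\rvert \log\lvert B\rvert + O(1) = o(\lvert F_i\rvert)$, so the last three summands together are $o(\lvert F_i\rvert)$. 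Dividing by $\lvert F_i\rvert$ and taking $\limsup$ yields $\uacf(\pi(y)) \le \uacf(y)$. I expect the only delicate point to be checking that the whole reduction is carried out by one computable function uniform in $i$, and in particular that one never pays the (possibly large) cost $C(F_i)$ of encoding the index set on its own; this is precisely what is avoided by recovering $MF_i\setminus F_i$ from the domain of $\pr_{F_i}(y)$ and transmitting only its content.
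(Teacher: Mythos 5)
Your proposal is correct and follows essentially the same route as the paper's proof: reconstruct $\pr_{F_i}(\pi(y))$ by a single computable function from $\pr_{F_i}(y)$ together with only the content string of $y$ on $MF_i\setminus F_i$, then apply the monotonicity and tuple bounds and use $\lvert MF_i\setminus F_i\rvert = o(\lvert F_i\rvert)$. The point you flag as delicate --- avoiding any separate payment of $C(F_i)$ by reading the domain off $\pr_{F_i}(y)$ --- is exactly how the paper's function $Q$ is set up.
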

\begin{proof}
Let $M$ be a memory set of cellular map $\pi$, we assume that $1_G$ belongs to it. 
Abusing notation a little, we extend the cellular map to $B^{\Subset G}$. Intuitively, we apply the local transformation rule from the definition of a cellular map in each place where we have enough information to do so.
Let us make it more precise. First, Let $\bar{\pi} : B^{G} \to A$ be the generator map for the cellular map $\pi$ (that is, $\bar{\pi}(y)= (\pi(y))(1_G)$). We may extend  map $\bar{\pi}$ to the set of such $t \in B^{\Subset G}$ that $\supp(t) \supset M$ (since for any $y \in B^G$ the value $\bar{\pi}(y)$ is completely determined by $\pr_{M}(y)$). Now, for $t \in A^{\Subset G}$, we define $\pi(t)$ in such a way that $\supp(\pi(t))$ is the maximal set $T$ satisfying $MT \subset \supp (t)$, and $(\pi(t))(g) = \bar{\pi}(g t)$, for each $g \in T$ (we extended the shift action of the group to the set $A^{\Subset G}$ in a natural fashion). It is easy to see that the extended map $\pi$ we constructed is consistent with the original one in the following sense. If $T$ is a finite subset of $G$, then $\pr_{T}(\pi(y)) = \pi(\pr_{MT}(y)))$ for every $y \in B^G$. We note that the restriction of $\pi$ to finite configurations is a computable function from constructible class $B^{\Subset G}$ to constructible class $A^{\Subset G}$. Let $Q : B^{\Subset G} \times B^* \to A^{\Subset G}$ be the following partial computable function. It takes $t \in B^{\Subset G}$, and a word $w \in B^*$ of the length $\lvert M\supp(t) \setminus \supp(t) \rvert$ (if length of $w$ does no satisfy this requirement, the behaviour of $Q$ is undefined), and outputs $\pi(t')$, where $t'$ is the unique configuration from $B^{\Subset G}$ such that
\begin{itemize}
\item $\supp(t') = M \supp(t)$, 
\item $\pr_{\supp(t)}(t') = t$,
\item $\cont(\pr_{M \supp(t) \setminus \supp(t)}(t')) = w$.
\end{itemize}
Note that
\[
\pr_{F_i}(\pi(y)) = \pi(Q(\pr_{F_i}(y), \cont(\pr_{M \F_i \setminus F_i}(y)))),
\]
for each  $y \in B^{G}$. This implies that 
\begin{multline*}
C(\pr_{F_i}(\pi(y))) \leq  C(\pr_{F_i}(y)) + C(\cont(\pr_{MF_i \setminus F_i}(y))) \\+ 2 \log C(\cont(\pr_{MF_i \setminus F_i}(y))),
\end{multline*}
by Properties \ref{compl prop: monot} and \ref{compl prop: tuple} from Proposition \ref{prop: compl properties}.
Since $\lvert MF_i \setminus F_i\rvert = o(\lvert F_i\rvert)$ (by the definition of F\o lner sequence) and 
\[
C(\cont(\pr_{MF_i \setminus F_i}(y))) \leq \log \lvert B \rvert \cdot \lvert MF_i \setminus F_i\rvert + O(1)
\] 
(by Property \ref{compl prop: trivial bound} from Proposition \ref{prop: compl properties}), we have 
\[
C(\cont(\pr_{MF_i \setminus F_i}(y))) = o(\lvert F_i \rvert), 
\]
and 
\[
\log C(\cont(\pr_{MF_i \setminus F_i}(y))) = o(\lvert F_i \rvert). 
\]
Thus we have 
\[
\uacf(\pi(y)) \leq \uacf(y).
\]

\end{proof}

We are now able to prove the main monotonicity statement for asymptotic Kolmogorov complexity:

\begin{prop}\label{prop: compl is factor-monotone}
Let $\pi: B^G \to A^G$ be a measurable equivariant map. Let $\F = (F_i)$ be a modest and tempered F\o lner sequence. Let $\mu$ be an ergodic invariant measure on $B^G$.
For $\mu$-a.e. $y\in B^G$ holds the following bound:
\[
\uacf(\pi(y)) \leq \uacf(y).
\] 
\end{prop}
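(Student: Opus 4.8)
The plan is to reduce the general measurable equivariant case to the cellular case, which is already settled, using the cellular approximation from Proposition \ref{prop: cellular approximation} together with the Hamming-stability of asymptotic complexity from Lemma \ref{lem: map approximation and complexity}. Since $\mu$ is an invariant Borel probability measure on $B^G$, I would apply Proposition \ref{prop: cellular approximation} with $\nu = \mu$ to produce, for each $n \in \N$, a cellular map $\pi_n : B^G \to A^G$ such that, writing $W_n = \lbrace y \in B^G : (\pi(y))(1_G) \neq (\pi_n(y))(1_G)\rbrace$, we have $\mu(W_n) < 1/n$.

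For each fixed $n$, I would invoke Lemma \ref{lem: map approximation and complexity} with $\pi' = \pi$ and $\pi'' = \pi_n$ (legitimate since $\F$ is modest): there is a full-measure set $Y_n \subset B^G$ on which
\[
\lvert \uacf(\pi(y)) - \uacf(\pi_n(y)) \rvert \leq H(\mu(W_n), 1 - \mu(W_n)) + \mu(W_n) \log \lvert A\rvert.
\]
Combining this with the cellular monotonicity of Lemma \ref{lem: complexity monotone cellular}, which gives $\uacf(\pi_n(y)) \leq \uacf(y)$ for every $y$ (and every F\o lner sequence), I obtain for every $y \in Y_n$ the bound
\[
\uacf(\pi(y)) \leq \uacf(y) + H(\mu(W_n), 1 - \mu(W_n)) + \mu(W_n) \log \lvert A\rvert.
\]

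Finally, I would pass to the limit in $n$. The key point requiring care is that the exceptional null set coming from Lemma \ref{lem: map approximation and complexity} depends on $n$; to circumvent this I set $Y' = \bigcap_{n \in \N} Y_n$, which still has full measure as a countable intersection of conull sets, so the displayed inequality holds simultaneously for all $n$ and all $y \in Y'$. Since $\mu(W_n) < 1/n \to 0$, both $H(\mu(W_n), 1 - \mu(W_n)) \to 0$ and $\mu(W_n)\log\lvert A\rvert \to 0$ as $n \to \infty$; letting $n \to \infty$ on the right-hand side yields $\uacf(\pi(y)) \leq \uacf(y)$ for every $y \in Y'$, i.e. for $\mu$-a.e. $y$. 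Apart from this routine use of a countable intersection, the argument is a direct assembly of the three preceding results, so I do not anticipate any serious obstacle.
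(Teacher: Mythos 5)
Your proposal is correct and follows essentially the same route as the paper's own proof: approximate $\pi$ by cellular maps via Proposition \ref{prop: cellular approximation}, apply the cellular monotonicity of Lemma \ref{lem: complexity monotone cellular} and the Hamming-stability bound of Lemma \ref{lem: map approximation and complexity}, and let the approximation error tend to zero. The only difference is that you make explicit the countable intersection of the conull sets $Y_n$, a routine step the paper leaves implicit when it fixes an arbitrary $\varepsilon>0$.
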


\begin{proof}
Take arbitrary $\varepsilon>0$. We aim to prove that $\uacf(\pi(y)) \leq \uacf(y) + \varepsilon$ for $\mu$-a.e. $y \in B^G$. 
There is $\varepsilon'>0$ such that for every non-negative $\varepsilon''<\varepsilon'$ we have 
\[
H(\varepsilon'', 1-\varepsilon'') + \varepsilon'' \log \lvert A\rvert  \leq \varepsilon.
\]
Let $\pi'$ be a cellular map from $B^G$ to $A^G$ such that for the set $W$ of all the points $y \in B^G$ for which $(\pi(y))(1_G) \neq (\pi'(y))(1_G)$, we have $\mu(W) < \varepsilon'$ (see Proposition \ref{prop: cellular approximation}).    Lemma \ref{lem: complexity monotone cellular} implies that $\uacf(\pi'(y)) \leq \uacf(\pi'(y))$ for every $y \in B^G$. Lemma \ref{lem: map approximation and complexity} implies that $\uacf(\pi(y)) \leq \uacf(\pi'(y)) + \varepsilon$ for $\mu$-a.e. $y \in B^G$. It follows that $\uacf(\pi(y)) \leq \uacf(y) + \varepsilon$ for $\mu$-a.e. $y \in B^G$.
\end{proof}

The next proposition is the main result of this section.

\begin{prop}\label{prop: measure upper bound}
Let $A$ be a finite set. Consider the shift-action of a computable amenable group $G$ on $A^G$. Let $\mu$ be an invariant ergodic measure on $A^G$. Let $\F=(F_i)$ be a modest and tempered F\o lner sequence. We have
\[
\uacf(x) \leq \h_G(A^G,\mu),
\]
for $\mu$-a.e. $x \in A^G$.
\end{prop}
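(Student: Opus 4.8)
The plan is to combine the two bounds already in hand---the ``trivial'' bound of Lemma~\ref{lem: simple bound for complexity via entropy} and the factor-monotonicity of Proposition~\ref{prop: compl is factor-monotone}---with Seward's generating-partition theorem (Theorem~\ref{thm: small entropy generator}), after first reducing to an essentially free action. Fix $\varepsilon>0$ and write $h=\h_G(A^G,\mu)$; note $h\leq \log\lvert A\rvert<\infty$. The idea is to manufacture a shift-system $(B^G,\kappa)$ that factors onto $(A^G,\mu)$ and whose canonical alphabet partition has entropy close to $h$: then the trivial bound controls the complexity upstairs and monotonicity transports it down to $(A^G,\mu)$.

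The first issue to dispatch is that the shift-action on $(A^G,\mu)$ need not be essentially free (for instance $\mu$ could charge the shift-fixed points), so Theorem~\ref{thm: small entropy generator} does not apply directly. To repair this I would take a Bernoulli shift $(C^G,\lambda)$ over the infinite group $G$ with a non-degenerate two-point single-site distribution of mass $p\in(0,1)$ chosen so small that $\h_G(C^G,\lambda)<\varepsilon$, and form the product shift-action on $(A\times C)^G\cong A^G\times C^G$ with measure $\mu\times\lambda$. Since Bernoulli actions of infinite groups are essentially free and weakly mixing, the product is essentially free and, being the product of an ergodic system with a weakly mixing one, ergodic; its Kolmogorov--Sinai entropy is $h+\h_G(C^G,\lambda)<h+\varepsilon<\infty$. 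The coordinate projection $\rho\colon (A\times C)^G\to A^G$ is an equivariant factor map with $\rho_*(\mu\times\lambda)=\mu$.

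Next I would feed this free ergodic finite-entropy system into Theorem~\ref{thm: small entropy generator}, obtaining a finite alphabet $B$ and a symbolic representation, i.e.\ an isomorphism $\theta\colon (B^G,\kappa)\to\bigl((A\times C)^G,\mu\times\lambda\bigr)$ with $\theta_*\kappa=\mu\times\lambda$, whose canonical alphabet generating partition $\beta$ satisfies $H(\beta)<h+2\varepsilon$. Setting $\Pi=\rho\circ\theta\colon B^G\to A^G$ gives a measurable equivariant map with $\Pi_*\kappa=\mu$, and $\kappa$ is ergodic. Applying Lemma~\ref{lem: simple bound for complexity via entropy} to $(B^G,\kappa)$ with the modest tempered sequence $\F$ yields $\uacf(y)\leq H(\beta)<h+2\varepsilon$ for $\kappa$-a.e.\ $y$, while Proposition~\ref{prop: compl is factor-monotone} gives $\uacf(\Pi(y))\leq\uacf(y)$ for $\kappa$-a.e.\ $y$. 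Chaining these, $\uacf(\Pi(y))\leq h+2\varepsilon$ for $\kappa$-a.e.\ $y$; since the set $\{x:\uacf(x)\leq h+2\varepsilon\}$ is measurable and $\Pi_*\kappa=\mu$, pushing forward gives $\uacf(x)\leq h+2\varepsilon$ for $\mu$-a.e.\ $x$. Intersecting the resulting full-measure sets over a sequence $\varepsilon\to 0$ yields $\uacf(x)\leq h$ for $\mu$-a.e.\ $x$.

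I expect the main obstacle to be organizational rather than computational: the two complexity estimates are already packaged as the cited lemma and proposition and the generating-partition input is black-boxed, so the real work lies in the freeness reduction and in the bookkeeping that keeps ergodicity, finite entropy, and the push-forward relations $\theta_*\kappa=\mu\times\lambda$ and $\Pi_*\kappa=\mu$ intact through the product and the symbolic isomorphism. The one genuinely delicate point is arranging the auxiliary Bernoulli factor to have arbitrarily small entropy while still forcing essential freeness of the product---which is precisely why a non-degenerate two-point Bernoulli shift, rather than a trivial one, is used.
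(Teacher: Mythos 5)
Your proposal is correct and follows essentially the same route as the paper's own proof: take the product with a small-entropy Bernoulli shift to force essential freeness and ergodicity, invoke Theorem~\ref{thm: small entropy generator} to obtain a symbolic representation whose canonical alphabet partition has entropy close to $\h_G(A^G,\mu)$, then combine the frequency bound of Lemma~\ref{lem: simple bound for complexity via entropy} upstairs with the factor-monotonicity of Proposition~\ref{prop: compl is factor-monotone} downstairs. The only cosmetic difference is your justification of ergodicity of the product via weak mixing of the Bernoulli factor, where the paper cites the same standard fact from Glasner.
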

\begin{proof}
Denote $h = \h_G(A^G,\mu)$. Fix any $\varepsilon>0$. We will prove that 
\[
\uacf(x) \leq h + \varepsilon,
\]
for $\mu$-a.e. $x \in A^G$.
Let $G \curvearrowright (A'^G, \mu')$ be a Bernoulli action of entropy smaller than $\varepsilon/2$. Consider the product-action $G \curvearrowright ((A \times A')^G,\mu \otimes \mu')$. This product is essentially free. It is also ergodic. Indeed, the product of a Bernoulli action with itself is ergodic, so its product with any other ergodic action is also ergodic (see Theorem 3.11 from the book \cite{Gl03}). Entropy of the product is smaller than $h+\varepsilon/2$ (\cite[Theorem 9.16]{KL16}). By Theorem \ref{thm: small entropy generator}, it has a symbolic representation $G \curvearrowright (B^G,\nu)$ such that for the canonical alphabet generating partition $\beta$ holds $H(\beta) \leq h + \varepsilon$. 
So we have a factor-map $\pi$ from $(B^G,\nu)$ to $(A^G,\mu)$.
By Lemma \ref{lem: simple bound for complexity via entropy}, we have $\uacf(y) \leq h + \varepsilon $ for $\nu$-a.e. $y \in B^G$.
By Proposition \ref{prop: compl is factor-monotone}, we have that for $\nu$-a.e. holds $\uacf(\pi(y))\leq \uacf(y)$. This implies that $\uacf(x) \leq h + \varepsilon$ for $\mu$-a.e. $x \in A^G$.
\end{proof}

\section{Topological entropy; lower and upper bounds for complexity}\label{sec: topological lower bound}

In this section we first show that for a subshift $X \subset A^G$ there are points with lower asymptotic complexities not smaller than the topological entropy of the subshift, and that there is a continuum of such points if $X$ has continuum cardinality.

Afterwards, we will prove that the topological entropy of a subshift bounds from above asymptotic complexities of its points. The idea behind the proof is the following. We remind that, if $y$ is an element of a finite set $W$, then we have the bound 
\[
C(y) \leq C(W) + \log \lvert W \rvert + 2 \log (C(W)) + O(1).
\]
We use a computable version of the Ornstein-Weiss covering lemma to show that for every $\varepsilon>0$ there is a partial computable function $Q: \Fin(G) \to A^{\Subset G}$ and a number $N$ such that $\pr_{F_i}(X) \subset Q(F_i)$, and $\log \lvert Q(F_i)\rvert \leq \lvert F_i \rvert (\h^{top}_G(X) + \varepsilon)$, for every $i>N$. This together with the inequality above would imply the desired bound.

\begin{prop}\label{prop: topological entropy lower bound}
Let $A$ be a finite alphabet and let $G$ be a computable amenable group. Let $X \subset A^G$ be a subshift. If $\F$ is a modest F\o lner sequence in $G$, then there is a point $x \in A^G$ such that 
\[
\lacf(x) \geq \h^{top}_G(X).
\]
If $X$ has continuum cardinality, then there is a continuum of such points.
\end{prop}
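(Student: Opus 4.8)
The plan is to realize the desired point as an infinite branch of the tree of finite patterns of $X$, chosen so as to avoid all low-complexity projections at large scales; producing such a point inside $X$ gives a point of $A^G$ as required. Write $h=\h^{top}_G(X)$ and recall that for a subshift $\log\lvert\pr_{F_i}(X)\rvert=(h+o(1))\lvert F_i\rvert$. Fix a sequence $\varepsilon_i\downarrow 0$ tending to $0$ slowly enough that modesty of $\F$ still yields $\sum_i 2^{-\varepsilon_i\lvert F_i\rvert}<\infty$ (this is the quantitative content of Proposition \ref{prop: modest convergence}; we may also discard repetitions from $\F$). Put $L_i=\{w\in\pr_{F_i}(X):C(w)<(h-\varepsilon_i)\lvert F_i\rvert\}$; by the counting bound (item \ref{compl prop: combinatorics} of Proposition \ref{prop: compl properties}) we have $\lvert L_i\rvert<2^{(h-\varepsilon_i)\lvert F_i\rvert}$. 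A point $x\in X$ with $\pr_{F_i}(x)\notin L_i$ for all large $i$ automatically satisfies $\lacf(x)\ge h$, so it suffices to produce one such point.

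To extract it I would fix a nested exhaustion $D_1\subset D_2\subset\cdots$ with $\bigcup_n D_n=G$ and view $\pr_{D_n}(X)$ as the $n$-th level of a finitely branching tree whose infinite branches are exactly the points of $X$. Call a node $v\in\pr_{D_n}(X)$ \emph{admissible} if $\pr_{F_i}(v)\notin L_i$ for every $i\ge N$ with $F_i\subseteq D_n$, where $N$ is a threshold to be fixed. Admissibility is preserved under restriction to a smaller $D_{n'}$, so the admissible nodes form a subtree; if it contains a node at every level then, being finitely branching, K\"onig's lemma produces an admissible branch, i.e. a point $x$ with $\pr_{F_i}(x)\notin L_i$ for all $i\ge N$, hence $\lacf(x)\ge h$. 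For the continuum statement I would keep the admissible subtree \emph{perfect}: when $\lvert X\rvert$ is the continuum the pattern tree has infinitely many branch points, and securing two admissible extensions at infinitely many of them yields a Cantor set of admissible branches.

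The heart of the argument is to show that admissible nodes survive at every level, i.e. that the inadmissible patterns do not exhaust $\pr_{D_n}(X)$. For a single scale $F_i\subseteq D_n$ the inadmissible patterns are the extensions to $D_n$ of the members of $L_i$; since $\lvert L_i\rvert<2^{(h-\varepsilon_i)\lvert F_i\rvert}$ while $\lvert\pr_{F_i}(X)\rvert=2^{(h+o(1))\lvert F_i\rvert}$, one expects the inadmissible fraction at scale $F_i$ to be of order $2^{-\varepsilon_i\lvert F_i\rvert}$. Summing over $i\ge N$ with $F_i\subseteq D_n$ and invoking $\sum_i 2^{-\varepsilon_i\lvert F_i\rvert}<\infty$, the total inadmissible fraction would be $<1$ once $N$ is large, leaving an admissible node at level $n$. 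For the full shift this is immediate, because each pattern on $F_i$ has exactly $\lvert A\rvert^{\lvert D_n\rvert-\lvert F_i\rvert}$ extensions, so ``fraction'' may be taken with respect to the uniform measure on $\pr_{D_n}(X)$ and the union bound goes through verbatim.

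The step I expect to be the genuine obstacle is precisely this counting for a non-uniform subshift: the extensions of a fixed $F_i$-pattern to $D_n$ need not be equidistributed, so a low-complexity pattern could carry anomalously many extensions and the crude fraction estimate can fail. What is really needed is a distribution on $\pr_{D_n}(X)$ under which every pattern---in particular every low-complexity one---has mass at most $2^{-(h-\varepsilon_i/2)\lvert F_i\rvert}$ at scale $F_i$; then the masses of the sets $L_i$ are summable and the union bound applies. The measure of maximal entropy furnished by the variational principle (Theorem \ref{thm: variational}) is the natural candidate, but the sharp per-cylinder bound it enjoys comes from the Shannon--McMillan--Breiman theorem and hence needs a tempered sequence, whereas here only modesty is assumed. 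Thus the crux is to obtain the required uniform control of cylinder masses (equivalently, of extension counts) by a direct Ornstein--Weiss-type covering and counting argument in place of SMB; this is where I expect the real work of the proof to lie.
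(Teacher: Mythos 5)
Your reduction to the claim ``there is a point $x\in X$ with $\pr_{F_i}(x)\notin L_i$ for all large $i$'' is sound, and the counting bound $\lvert L_i\rvert<2^{(h-\varepsilon_i)\lvert F_i\rvert}$ is the right ingredient; but, as you yourself acknowledge in your last paragraph, the proposal stops exactly at the step that carries all the weight: you never establish that some probability measure on $X$ gives every pattern in $\pr_{F_i}(X)$ mass at most roughly $2^{-(h-\varepsilon_i/2)\lvert F_i\rvert}$, so the union bound over scales is never actually justified. This is a genuine gap, not a technicality: for a general subshift the uniform counting argument fails (a low-complexity pattern can carry anomalously many extensions), and the hoped-for ``direct Ornstein--Weiss-type covering argument in place of SMB'' is not supplied. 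The paper fills precisely this hole by taking an ergodic measure $\mu$ of maximal entropy from the variational principle (Theorem \ref{thm: variational}) and then invoking Proposition \ref{prop: measure lower bound}, whose proof is the Lindenstrauss Shannon--McMillan--Breiman theorem plus Borel--Cantelli, with Proposition \ref{prop: modest convergence} supplying summability from modesty. Your diagnosis that this route needs a \emph{tempered} F\o lner sequence is correct; that hypothesis is in fact present in the corresponding clause of Theorem \ref{thm: topological}, and the paper's proof of this proposition does use it, so there is no temperedness-free argument hiding in the paper for you to recover.

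Two further remarks. Once a measure with the right cylinder bounds is in hand, the tree and K\"onig's lemma machinery is superfluous: Borel--Cantelli already shows that $\mu$-almost every point satisfies $\lacf(x)\ge h$, so a good point exists outright. And for the continuum statement your plan of keeping the admissible subtree perfect is both unjustified (you do not show that two admissible extensions survive at infinitely many branch points) and unnecessary: the paper simply observes that if $h>0$ the maximal-entropy measure is non-atomic, hence every set of full measure has continuum cardinality, while if $h=0$ every point of $X$ trivially satisfies the bound.
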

\begin{proof}
By Theorem \ref{thm: variational}, there is an ergodic invariant measure $\mu$ such that $\h^{top}_G(X) = \h_G(X,\mu)$. There is a point $x \in X$ with 
\[
\lacf(x) \geq \h_G(X,\mu)
\]
by Proposition \ref{prop: measure lower bound}.
If $\h^{top}_G(X) = 0$, then all the points from $X$ satisfy the requirement. If $\h^{top}_G(X) > 0$, then measure $\mu$ is non-atomic(since otherwise the coresponding maeure entropy would be zero), and hence it is continuous, This means that any subset of full measure has the continuum cardinality.
\end{proof}


The following is a suitable for our purpose variant of the Ornstein-Weiss lemma. It deals with the problem of covering the elements of a F\o lner sequence by translates of other F\o lner sets. While it is impossible to achieve a perfect coverage without intersections, one can have an almost cover with only small intersections. 
\begin{lem}
Let $G$ be a computable amenable group. Let $(F_i)$ be a F\o lner sequence. For every rational $\varepsilon>0$ there is a finite sequence $j_1, \ldots, j_k$, a number $N$, and program $R$ such that the following holds. Program $R$ receives a finite subset $T$. If $T = F_i$ for some $i>N$, then it outputs the sequence of subsets $R_1=R_1(T), \ldots, R_k=R_k(T)$ satisfying the assertions below; otherwise its behaviour is undefined.
\begin{enumerate}
\item $\bigcup_{s=1}^k F_{j_s} R_s \subset T$;
\item $\lvert T \setminus \bigcup_{s=1}^k F_{j_s} R_s\rvert \leq \varepsilon \lvert T \rvert$;
\item $\sum_{s=1}^k \lvert F_{j_s} \rvert \cdot \lvert R_s \rvert \leq (1+\varepsilon) \left\lvert \bigcup_{s=1}^k F_{j_s} R_s \right\rvert$;
\item $\sum_{s=1}^k \lvert F_{j_s} \rvert \cdot \lvert R_s \rvert \leq (1+\varepsilon) \lvert T\rvert$.
\end{enumerate} 
\end{lem}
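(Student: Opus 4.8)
The plan is to establish the statement as a computable version of the classical Ornstein--Weiss quasi-tiling lemma. The underlying combinatorial fact is standard: given a F\o lner sequence, one can cover all but an $\varepsilon$-fraction of any sufficiently invariant set by disjoint (or nearly disjoint) translates of finitely many F\o lner sets $F_{j_1}, \ldots, F_{j_k}$ chosen greedily from deeper and deeper in the sequence. The key point I need to add to the classical argument is \emph{effectivity}: the indices $j_1, \ldots, j_k$ and the threshold $N$ are fixed in advance (depending only on $\varepsilon$ and on $\F$), and the selection of the translation sets $R_1, \ldots, R_k$ from the input $T$ is carried out by an explicit algorithm $R$. Since $G$ is computable and the map $i \mapsto F_i$ is computable, this effectivity will come essentially for free once the greedy procedure is spelled out.

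First I would fix $\varepsilon > 0$ and recall the quantitative Ornstein--Weiss setup: choose finitely many indices $j_1 < j_2 < \cdots < j_k$ so that each $F_{j_{s+1}}$ is sufficiently invariant relative to $F_{j_1}, \ldots, F_{j_s}$ (in the usual sense controlling overlaps of translates), and so that $k$ is large enough that a greedy covering by $\varepsilon$-disjoint translates leaves an uncovered remainder of density at most $\varepsilon$. The number $N$ is then chosen so that for $i > N$ the set $F_i$ is sufficiently invariant with respect to all of $F_{j_1}, \ldots, F_{j_k}$; this is exactly the regime in which the greedy packing argument applies to $T = F_i$. The algorithm $R$ processes the input $T$ in rounds $s = k, k-1, \ldots, 1$ (from the largest F\o lner set down): in round $s$ it greedily selects a maximal collection of translates $F_{j_s} r$, $r \in R_s$, that fit inside the currently uncovered part of $T$ and are $\varepsilon$-disjoint from previously chosen translates, then removes the covered cells. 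Because $G$ is computable and each $F_{j_s}$ is a fixed finite set, every test ``does $F_{j_s} r \subseteq T \setminus (\text{covered})$?'' is decidable, so $R$ is a genuine partial computable function from $\Fin(G)$ to tuples of finite subsets.

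With the procedure in place, assertion (1) holds by construction since we only ever place translates inside $T$. Assertion (2) is the content of the Ornstein--Weiss covering bound: maximality of the greedy selection at the smallest scale, together with the invariance guaranteed for $i > N$, forces the uncovered remainder to have measure at most $\varepsilon \lvert T \rvert$. Assertion (3) bounds the total mass $\sum_s \lvert F_{j_s}\rvert \cdot \lvert R_s\rvert$ of all placed translates against the measure of their union by the factor $(1+\varepsilon)$; this is precisely the $\varepsilon$-disjointness condition imposed during selection, which ensures overlaps contribute at most an $\varepsilon$-fraction. Finally assertion (4) follows by combining (3) with (1), since the union is contained in $T$.

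The main obstacle I anticipate is not the effectivity, which is routine once the greedy scheme is made explicit, but rather bookkeeping the interaction between the two small parameters: the $\varepsilon$-disjointness controlling overlaps in (3) and the invariance controlling the uncovered remainder in (2). In the classical Ornstein--Weiss argument these are tuned by separate quantities, and I must verify that a single $\varepsilon$ can simultaneously govern both conclusions after possibly passing to a finer choice of the indices $j_s$ and a larger $N$. Concretely, the delicate step is showing that greedily packing translates of the \emph{larger} F\o lner sets first (and only filling gaps with smaller ones) achieves both high coverage density and low overlap; this is exactly where the nested invariance $F_{j_{s+1}} \gg F_{j_s}$ and the choice of $k$ depending on $\varepsilon$ are used, and it is the one place where I would reproduce the quantitative estimates of the Ornstein--Weiss lemma rather than invoke them as a black box.
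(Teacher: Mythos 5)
Your proposal is correct in outline, but it takes a genuinely different (and more laborious) route than the paper. The paper's proof is two sentences: it cites the classical Ornstein--Weiss covering lemma (\cite{OW87}, \cite[Lemma 9.23]{KL16}) as a black box to get the \emph{existence} of indices $j_1,\dots,j_k$, a threshold $N$, and, for each $i>N$, subsets $R_1,\dots,R_k$ satisfying (1)--(4); it then obtains effectivity for free by letting the program $R$ exhaustively search over the finitely many candidate tuples $(R_1,\dots,R_k)$ of subsets of the input $T$ and output the first one satisfying the four conditions. Since $G$ is computable and each condition is a decidable property of a tuple of finite sets (finite unions, translates, and cardinality comparisons), and since the classical lemma guarantees a witness exists when $T=F_i$ with $i>N$, this search terminates. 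Your approach instead makes the greedy quasi-tiling construction itself the algorithm, which is perfectly valid but forces you to reproduce the quantitative Ornstein--Weiss estimates --- precisely the ``delicate step'' you flag about tuning $\varepsilon$-disjointness against coverage density. That burden is real but standard; what you gain is an algorithm that is actually efficient and a self-contained proof, whereas the paper's brute-force trick is shorter and illustrates a generally useful principle: any decidable property known (non-effectively) to have a witness in a finite, computably generated candidate set is automatically effectively witnessed. If you pursue your route, do carry out the overlap bookkeeping for (3) explicitly, since the statement requires a single $\varepsilon$ to govern both the uncovered remainder in (2) and the multiplicity of the cover in (3); alternatively, note that you may first prove the lemma with two separate parameters and then shrink both below the given $\varepsilon$.
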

\begin{proof}
If we drop the requirement for $R_1, \ldots, R_k$ to be generated by a program, the statement becomes a variant of the Ornstein-Weiss covering lemma (see \cite{OW87} and \cite{KL16} Lemma  9.23). The program will just search for the output among finitely many variants.
\end{proof}

\begin{prop}\label{prop: topological upper bound}
Let $G$ be a computable amenable group. Let $A$ be a finite alphabet. Suppose $X \subset A^G$ is a subshift. Let $(F_i)$ be a modest F\o lner sequence for group $G$. The bound
\[
\uacf(x) \leq \h^{top}_G(X)
\]
hold for every $x \in X$.
\end{prop}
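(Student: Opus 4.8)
The plan is to carry out exactly the strategy sketched before the statement: for each $\varepsilon>0$ build a \emph{computable} function $Q$ that on input $F_i$ returns a small finite set containing $\pr_{F_i}(X)$, and then apply the small-set complexity bound, Property \ref{compl prop: small set compl} of Proposition \ref{prop: compl properties}, which says $C(u)\le C(T)+2\log C(T)+\log|T|+O(1)$ for $u\in T$.

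First I would fix a rational $\varepsilon>0$ and, using the definition $\h^{top}_G(X)=\lim_i \frac{\log|\pr_{F_i}(X)|}{|F_i|}$, pick $\delta>0$ and an index $J$ so that $\log|\pr_{F_j}(X)|\le(\h^{top}_G(X)+\delta)|F_j|$ for all $j\ge J$. Then I would apply the computable Ornstein--Weiss covering lemma proved above with a small rational parameter $\varepsilon'$, requiring the returned indices $j_1,\dots,j_k$ to satisfy $j_s\ge J$; this provides a number $N$ and a program $R$ that, for $i>N$, outputs $R_1,\dots,R_k$ with $\bigcup_s F_{j_s}R_s\subset F_i$, uncovered part of size at most $\varepsilon'|F_i|$, and $\sum_s|F_{j_s}||R_s|\le(1+\varepsilon')|F_i|$.

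The decisive observation is that the finitely many sets $P_s:=\pr_{F_{j_s}}(X)\subset A^{F_{j_s}}$ are fixed finite objects, so they can be hardcoded into a program even though $X$ itself need not be computable. I would define $Q(F_i)$ as the set of all $w\in A^{F_i}$ whose restriction to each translate $F_{j_s}r$ ($r\in R_s$) agrees, after shifting by $r$, with an element of $P_s$, with no constraint on the uncovered cells. Because $X$ is shift-invariant, $rx\in X$ and hence the pattern of any $x\in X$ on $F_{j_s}r$ is such a translate, so $\pr_{F_i}(X)\subset Q(F_i)$, and $Q$ is a partial computable function of $F_i$. For the cardinality, a configuration in $Q(F_i)$ is determined by selecting, for each $s$ and each $r\in R_s$, an element of $P_s$ (overlaps only cut the count down) together with arbitrary symbols on the uncovered cells, whence $\log|Q(F_i)|\le\sum_s|R_s|\log|P_s|+\varepsilon'|F_i|\log|A|\le(\h^{top}_G(X)+\delta)(1+\varepsilon')|F_i|+\varepsilon'\log|A|\cdot|F_i|$, using $j_s\ge J$ and covering property (4). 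Taking $\delta,\varepsilon'$ small enough makes the right-hand side at most $(\h^{top}_G(X)+\varepsilon)|F_i|$.

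Finally I would assemble the estimate. Since $Q$ is computable and $\F$ is modest, Property \ref{compl prop: monot} gives $C(Q(F_i))\le C(F_i)+O(1)=o(|F_i|)$, and therefore also $\log C(Q(F_i))=o(|F_i|)$. The small-set bound then yields, for every $x\in X$ with a constant depending only on $X$ and $\varepsilon$, the inequality $C(\pr_{F_i}(x))\le C(Q(F_i))+2\log C(Q(F_i))+\log|Q(F_i)|+O(1)$; dividing by $|F_i|$ and passing to the upper limit gives $\uacf(x)\le\h^{top}_G(X)+\varepsilon$, and letting $\varepsilon\to0$ completes the proof. I expect the main obstacle to lie in the third paragraph: extracting covering indices $j_s\ge J$ from the covering lemma (a standard but not-quite-stated strengthening, since the greedy selection may begin arbitrarily far out in $\F$), and checking that overlaps among the translates $F_{j_s}R_s$ only decrease $|Q(F_i)|$, so that the clean product bound $\prod_s|P_s|^{|R_s|}$ genuinely controls the count.
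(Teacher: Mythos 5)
Your proposal is correct and follows essentially the same route as the paper: a computable Ornstein--Weiss cover of $F_i$ by translates of finitely many hardcoded patterns $\pr_{F_{j_s}}(X)$, a cardinality bound on the resulting superset $Q(F_i)\supset\pr_{F_i}(X)$, and the small-set complexity property combined with modesty of $\F$. The one obstacle you flag (forcing $j_s\ge J$ in the covering lemma) is resolved in the paper simply by discarding a finite initial segment of the F\o lner sequence before invoking the lemma, so that $\log\lvert\pr_{F_i}(X)\rvert<(h+\varepsilon)\lvert F_i\rvert$ holds for every index and no strengthening of the covering lemma is needed.
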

\begin{proof}
Denote $h=\h^{top}_G(X)$.
Take an arbitrary rational $\varepsilon$. It suffices to prove that
\[
\uacf(x) \leq (1+\varepsilon)(h +\varepsilon)  + \varepsilon \log\lvert A\rvert \label{eq: upper bound}\tag{*},
\]
for every $x \in X$.
Throwing away a finite initial part of $(F_i)$, we may assume that 
\[
\log\left\lvert \pr_{F_i}(X)\right\rvert < (h+ \varepsilon) \lvert F_i \rvert
\]
for every $i$.
We apply the previous lemma. This gives us a collection $F_{j_1}, \ldots, F_{j_k}$, a number $N$, and a program $R$. We record the finite subsets $\pr_{F_{j_1}}(X), \ldots,\pr_{F_{j_k}}(X)$ for the future use.
Now we will consruct a program $Q$ that will take a finite subset $T$ of $G$, and if $T=F_i$ for some $i>N$, it will output the set $Q(T)$ of all $y \in A^T$ satisfying $\pr_{F_{j_s}m}(y) \in \pr_{F_{j_s}m}(X)$ for every $s \in 1..k$ and $m \in R_s(T)$. Note  that $\pr_{F_{j_s}m}(y) \in \pr_{F_{j_s}m}(X)$ is equivalent to $\pr_{F_{j_s}}(my) \in \pr_{F_{j_s}}(X)$), so we only need the finite subsets $\pr_{F_{j_1}}(X), \ldots,\pr_{F_{j_k}}(X)$ for this. Alltogether, $Q$ will first apply program $R$ to $T$, and output (if $R$ finished correctly) exactly those $y \in A^T$ such that $\pr_{F_{j_s}}(my) \in \pr_{F_{j_s}}(X)$ for every $s \in 1..k$ and $m \in R_s(T)$.

I claim that for $i>N$ we have 
\[
\log\lvert Q(F_i) \rvert \leq \left((1+\varepsilon)(h +\varepsilon)  + \varepsilon \log \lvert A\rvert\right) \lvert F_i \rvert. \tag{**}\label{eq: upper bond Q}
\]
Indeed, denote 
\[
T'= \bigcup_{s=1}^k F_{j_s} R_s(F_i).
\]
It follows from the previous lemma that 
\[
\log\left\lvert \pr_{T'}(Q(F_i)) \right\rvert \leq \sum_{s=1}^k\sum_{m \in R_i(F_s)} \log \left\lvert \pr_{F_{j_s}m}(Q(F_i))\right\rvert  \leq (1+\varepsilon)(h+\varepsilon)\lvert F_i\rvert.
\]
Bound \eqref{eq: upper bond Q} follows since 
\[
\log\lvert Q(F_i)\rvert \leq \log\lvert \pr_{T'}Q(F_i)\rvert + \lvert F_i \setminus T'\rvert \log \lvert A \rvert \leq \log\lvert \pr_{T'}Q(F_i)\rvert + \varepsilon\lvert F_i \rvert \log \lvert A \rvert.
\]
Note that for $i>N$ we have $\pr_{F_i}(X) \subset Q(F_i)$. We also have 
\[
C(Q(F_i)) \leq C(F_i) + O_i(1), 
\]
by Property \ref{compl prop: monot} from Proposition \ref{prop: compl properties}. Property \ref{compl prop: small set compl} from Propostion \ref{prop: compl properties} implies that for every $y \in Q(F_i)$ and for $i>N$ we have
\[
C(y) \leq \left((1+\varepsilon)(h +\varepsilon)  + \varepsilon \log\lvert A\rvert \right)\lvert F_i \rvert + o(\lvert F_i \rvert).
\]
Hence, \eqref{eq: upper bound} holds for every $x \in X$.
\end{proof}

\appendix
\section{Encoding and decoding algorithms for Propostion \ref{thm: geometric criterion}}\label{app: algorithms}

These algorithms are based on the standard depth-first search algorithm for graphs (see \cite{CLRS}, Chapter 22.3). 

\begin{algorithm}\label{dfs}
\caption{Encoding of a connected set}
\SetKwFunction{encode}{encode}
\SetKwFunction{dfsvisit}{dfs\_visit}
\SetKwFunction{print}{print}
\KwData{A finite subset $T$ of the group}
\KwResult{an encoding binary string}
\encode \Begin{
	\For{ $g \in ST \cup T  $}{
		visited[$g$] $\leftarrow$ false\;
	}
	\dfsvisit($1_G$)
}
\dfsvisit ($h$) \Begin{
	\If{ not  visited[$h$] }{
		visited[$h$] $\leftarrow$ true\;
		\eIf{$h \in T$}{\label{dfs:keyline}
			\print{'1'}\;
			\For{$g \in S$}{
				\dfsvisit($gh$)\;
			}
		}{
			\print{'0'}\;
		}
	}
} 
\end{algorithm}

\begin{algorithm}\label{undfs}
\caption{Decoding of a set}
\KwData{encoding binary string}
\KwResult{the list of set elements}
\SetKwFunction{decode}{decode}
\SetKwFunction{undfsvisit}{undfs\_visit}
\SetKwFunction{print}{print}
\SetKwFunction{Kwread}{read}
\decode\Begin{
	n $\leftarrow$ length(input string)\;
	\For{ $g \in B(1_G,n)$}{
		visited[$g$] $\leftarrow$ false\;
	}
	\undfsvisit ($1_G$)\;
}
\undfsvisit ($h$) \Begin{
	\If{not visited $[h]$}{
		visited[$h$] $\leftarrow$ true\;
		\Kwread(bit)\;
		\If{bit}{
			\print($g$)\;
			\For{$g \in S$}{
				\undfsvisit ($gh$)\;
			}
		}
	}
} 
\end{algorithm}

\newpage

\end{document}